%% file: main.tex
\documentclass[letterpaper, 10 pt, conference]{ieeeconf}  % Comment this line out
\IEEEoverridecommandlockouts                              % This command is only
\usepackage{graphics} % for pdf, bitmapped graphics files
\usepackage{epsfig} % for postscript graphics files
\usepackage{mathptmx} % assumes new font selection scheme installed
\usepackage{times} % assumes new font selection scheme installed
\usepackage{amsmath,amsfonts} % assumes amsmath package installed
\usepackage{amssymb}  % assumes amsmath package installed
\usepackage{tabularx}
\usepackage{algorithm}
\usepackage{algorithmic}
\usepackage{subcaption}
\usepackage{url}
\usepackage[normalem]{ulem}
\usepackage{dsfont}
\usepackage{graphicx}
\usepackage[dvipsnames,usenames]{xcolor}
\usepackage[colorlinks,%
linkcolor=BrickRed,%
filecolor=BrickRed,%
citecolor=RoyalPurple,%
]{hyperref}
\usepackage{float}

\DeclareFontEncoding{LS1}{}{}
\DeclareFontSubstitution{LS1}{stix}{m}{n}
\DeclareSymbolFont{stixletters}{LS1}{stix}{m}{it}
\DeclareMathAccent{\cev}{\mathord}{stixletters}{"91}

\input{_definitions}

\newcommand{\F}{{\mathcal F}}
\newcommand{\Xr}{\tilde X}
\newcommand{\Xo}{X^o}
\newcommand{\fr}{\tilde f}
\newcommand{\gr}{\tilde g}
\newcommand{\Ur}{\tilde U}
\newcommand{\Wr}{\tilde W}
\newcommand{\Yr}{\tilde Y}
\newcommand{\Zr}{\tilde Z}
\newcommand{\Yn}{\mathsf Y}

\title{\LARGE \bf
Time-Reversed BSDEs for Accurate Gradient Estimation in\\ Diffusion Models
}

\author{
Yuhang Mei$^\star$ and
Amirhossein Taghvaei$^\star$
  {\thanks{$^\star$Department of Aeronautics \& Astronautics, University of Washington, Seattle; {\tt\small yuhangm@uw.edu ,amirtag@uw.edu}. This research is supported by the National Science Foundation (NSF) award EPCN-2347358.}}
}

\begin{document}

\maketitle
\thispagestyle{empty}
\pagestyle{empty}

%%%%%%%%%%%%%%%%%%%%%%%%%%%%%%%%%%%%%%%%%%%%%%%%%%%%%%%%%%%%%%%%%%%%%%%%%%%%%%%%
\begin{abstract}
There is a growing literature adopting a stochastic optimal control (SOC) perspective to fine-tune diffusion models and related generative policies. A prominent class of methods, known as \emph{iterative diffusion optimization}, solves the SOC problem by simulating the diffusion process, evaluating a loss function, and applying stochastic optimization algorithms, with \emph{adjoint matching} emerging as a state-of-the-art approach. However, the adjoint process used in these methods is  not adapted to the forward diffusion filtration, which can lead to unstable or high-variance gradient estimates.
In this paper, we revisit gradient estimation in diffusion models through the lens of backward stochastic differential equations (BSDEs). We propose an alternative estimator based on a time-reversed BSDE formulation introduced in our prior work, which produces an adjoint process adapted to the underlying filtration. This adapted structure leads to more stable gradient estimates with potentially lower variance.
We analyze the accuracy of the proposed estimator and compare it with adjoint matching. Numerical experiments on fine-tuning toy diffusion models demonstrate improved gradient stability and competitive performance.
\end{abstract}

%%%%%%%%%%%%%%%%%%%%%%%%%%%%%%%%%%%%%%%%%%%%%%%%%%%%%%%%%%%%%%%%%%%%%%%%%%%%%%%%
\section{Introduction}
\label{sec:introduction}
Generative diffusion models~\cite{song2019generative,ho2020denoising,songscore2021score} have demonstrated remarkable capability in generating samples from complex high-dimensional distributions, across a wide range of applications, including images synthesis~\cite{rombach2022high},
protein structure generation~\cite{watson2023novo}, 
text generation~\cite{li2022diffusion}, 
video generation~\cite{ho2022video},
and robotics control policy generation~\cite{chi2025diffusion}. In many practical scenarios, however, it is desirable to steer the generation process toward outputs that satisfy additional objectives, preferences, or task-specific requirements. Compared to training a new diffusion model from scratch, fine-tuning an existing model provides a  more efficient and cost-effective approach. 

A common strategy to fine-tuning diffusion models is to introduce a reward function that measures the quality of generated samples. This formulation naturally enables the use of both reinforcement learning (RL)~\cite{fan2023dpok,blacktraining} and stochastic optimal control (SOC) methods~\cite{uehara2024fine,domingoadjoint,tang2024fine}.
The interplay between SOC and generative modeling has recently garnered significant attention~\cite{zhang2022fast,rapakoulias2024go,chen2022likelihood,elamvazhuthi2025flow,mei2025synthesis,Mei2025FlowMatching}.
% Empirical results reported in~\cite[Tables 1--3]{uehara2024fine} suggest that SOC-based methods can outperform RL-based approaches.
Motivated by these developments, we adopt the SOC perspective in this paper and focus on a class of SOC-based methods known as \emph{iterative diffusion optimization}\cite{nusken2021solving}. These methods fine-tune the model by repeatedly simulating the diffusion process, evaluating a loss function on the generated samples, and updating the parameters via stochastic optimization. Among them, adjoint matching has emerged as a state-of-the-art technique~\cite{domingoadjoint}. However, the adjoint process employed in these methods is  not adapted to the forward diffusion filtration, which may lead to unstable or high-variance gradient estimates.

These limitations motivate the development of more principled adjoint methods in which the adjoint process is adapted to the forward filtration and characterized by a backward stochastic differential equation (BSDE), as introduced in the stochastic maximum principle~\cite{yong1999stochastic}. In our prior work~\cite{taghvaei2024time,mei2025time}, we introduced a novel time-reversal methodology for solving such BSDEs and showed that it compares favorably with existing BSDE solvers in the linear setting. In this paper, we extend this numerical framework to the nonlinear setting and demonstrate its effectiveness for fine-tuning nonlinear diffusion models.

Here are the contributions of the paper: (i) We propose a novel and accurate gradient estimator for fine-tuning diffusion models; (ii) we present both adapted and non-adapted adjoint processes and establish a theoretical comparison; (iii) We conduct extensive numerical experiments comparing our method with baseline approaches on both linear and nonlinear stochastic models, and demonstrate its effectiveness, outperforming adjoint matching in a full fine-tuning task on a toy diffusion model.

% This motivates the development of more principled gradient estimation methods that leverage both the structure of diffusion models and the theoretical framework of SOC. In this paper, we propose a novel and accurate gradient estimator derived from BSDE. It is based on the time-reversed BSDEs methodology proposed in our prior work~\cite{taghvaei2024time,mei2025time} to backpropagate and ``denoise'' the gradient.

% We present our algorithm and further discuss the relationship between the adapted adjoint process and the non-adapted adjoint process. The numerical implementation of our algorithm consists of four main steps: 1. forward simulation of the SDE; 2. time-reversal simulation of the forward SDE; 3. time-reversal simulation of the backward SDE; 4. learning a time-varying function via regression between the trajectories of the time-reversed forward SDE and the time-reversed BSDE.
% The numerical experiments include a comparison of gradient estimation on both linear and nonlinear models. In addition, we implement the full fine-tuning procedure on a toy diffusion model and compare the performance of our method with adjoint matching. The results demonstrate improved gradient estimation and competitive performance on diffusion model fine-tuning tasks.

The paper is organized as follows: (i) the SOC problem for fine-tuning diffusion models is introduced in Sec.~\ref{sec:prob}; (ii) the non-adapted and adapted adjoint processes are introduced and compared in Sec.~\ref{sec:adjoint}; (iii) the proposed time-reveresal methodology to solve the BSDE is presented in Sec.~\ref{sec:method}; (iv) finally, numerical results comparing the performance of our method with adjoint matching are presented in Sec.~\ref{sec:num}.

%%%%%%%%%%%%%%%%%%%%%%%%%%%%%%%%%%%%%%%%%%%%%%%%%%%%%%%%%%%%%%%%%%%%%%%%%%%%%%%%
\section{Problem Formulation}\label{sec:prob}
Consider a $n$-dimensional diffusion process $\Xo=\{\Xo_t;\,0\leq t\leq T\}$ governed by the following stochastic differential equation (SDE)
\begin{equation}\label{eq:fsde}
    \ud \Xo_t = f(t,\Xo_t) \ud t + g(t,\Xo_t) \ud W_t, ~~\Xo_0 \sim p_0,
\end{equation}
where $W=\{W_t;\,0\leq t\leq T\}$ is the $n$-dimensional standard Wiener process, $p_0$ is the initial distribution, $f: \R \times \R^n \to \R^n$ is the drift function, and $g: \R \times \R^n \to \R^{n\times n}$ is the diffusion matrix.  These functions are assumed to satisfy the usual regularity conditions that ensure the existence of a strong solution~\cite[Thm. 5.2.1]{oksendal2003stochastic}. 
We interpret the SDE~\eqref{eq:fsde} as a pre-trained diffusion model~\cite{songscore2021score}. Specifically, the initial distribution, the drift function, and the diffusion matrix are designed such that the distribution of $\Xo_T$, at the terminal time $T$, matches a target distribution $p_T$ representing data. 
% That is to say, the diffusion model generates samples $X_T$ from the target distribution $p_T$. 
%While we keep the formulation of the SDE~\eqref{eq:fsde} general, we note that in many practical diffusion models, the initial distribution $p_0$ is Gaussian and the diffusion matrix $g(t,x)$ does not depend on the state $x$. 

We are interested in the problem of fine-tuning the diffusion model so that it generates samples  from the ``tilted" target distribution 
\begin{align*}
    q^\star_T \propto p_Te^{-\ell_f},
\end{align*}
where $\ell_f$ is a user-specified loss function that evaluates the quality of generated samples.
% and measures how well they align with the objectives of the fine-tuning task.
%When $\beta=0$, the tilted distribution is equal to $p_T$, corresponding to no modification to the original diffusion model. As $\beta \to \infty$, the distribution increasingly concentrates on regions where $\ell_f$ is minimized. The function $\ell_f$ encodes desirable outcomes by serving as a terminal cost (or reward) that evaluates the quality of generated samples and measures how well they align with the objectives of the fine-tuning task.
The modified or the controlled diffusion is denoted by $X=\{X_t;\,0\leq t\leq T\}$ and modeled by 
%the SDE
\begin{equation}\label{eq:fsde-modified}
    \ud X_t = f(t,X_t) \ud t + g(t,X_t) (U_t\ud t + \ud W_t), ~~X_0 \sim q_0,
\end{equation}
where $q_0$ is the modified initial distribution, and $U=\{U_t;\,0\leq t\leq T\}$ is the control input process which is assumed to be adapted to the filtration generated by the initial condition and the Wiener process.  

The problem of sampling from the tilted target distribution can now be formulated as a stochastic control problem in which the initial distribution $q_0$ and the control input $U$ are selected so that the distribution of $X_T$ matches the tilted target distribution $q^\star_T$. Using a KL-regularized variational characterization of the tilted distribution, together with Girsanov’s theorem, this problem can be expressed as the following stochastic optimal control problem:
\begin{align}\label{eq:SOC}
    \min_{q_0,U}\, \Expect\left[\ell_f(X_T) + \int_0^T \frac{1}{2} \|U_t\|^2 \ud t \right] + D_\mathrm{KL}(q_0\|p_0).
\end{align}

Stochastic optimal control problems of this form have a long history in the control literature, particularly in path-integral control~\cite{kappen2005linear,todorov2006linearly,theodorou2010generalized}, where the control-affine structure of the model~\eqref{eq:fsde-modified} and the quadratic control cost in~\eqref{eq:SOC} arises naturally through a change-of-measure argument related to Girsanov’s theorem. 
% Indeed, letting $P_{\tilde X}$ and $P_{X}$ denote the path probability law for the original diffusion process $\tilde X$ and the  controlled diffusion process $X$, respectively, the SOC problem~\eqref{eq:SOC} may be expressed as
% \begin{align}\label{eq:SOC-KL}
%     \min_{q_0, U}\, \Expect[\beta\ell_f(X_T)] + D_\mathrm{KL}(P_{ X} \| P_{\tilde X}).
% \end{align}
In particular, the optimal 
controlled diffusion process that solves the SOC problem~\eqref{eq:SOC} has the path probability law
\begin{align*}
    \frac{\ud P_{X}}{\ud P_{\Xo}} = \frac{\exp\left(- \ell_f(\Xo_T)\right)}{\Expect[\exp\left(- \ell_f(\Xo_T)\right)]},
\end{align*}
where the expectation is taken with respect to the original diffusion process~\eqref{eq:fsde}. Specifically, the terminal distribution of the optimal controlled process $X_T$ coincides with the tilted target distribution $q^\star_T$. The  path probability law allows us to derive expressions for the optimal initial distribution and the optimal control~\cite[Thm. 1]{uehara2024fine}, however, accurately estimating the expectations is challenging due to the high variance of the exponential weight $\exp(-\ell_f(\Xo_T))$~\cite{williams2017model, exarchos2018stochastic}. 
%A common remedy is to employ importance sampling by introducing an auxiliary control input to modify the diffusion dynamics when evaluating these expectations~\cite{}. 
In this paper, we investigate an alternative class of optimization-based approaches, motivated by recent advances in diffusion models. 

\section{Adjoint processes}\label{sec:adjoint}
In this section, we present two adjoint processes that arise in the gradient estimation of the stochastic optimal control problem~\eqref{eq:SOC}.  
The first is a \emph{non-adapted adjoint process} that appears in adjoint matching methods and is obtained via pathwise differentiation of the objective.  
The second is an \emph{adapted adjoint process} obtained from a BSDE.  

\subsection{Notations} 
Let $(\Omega,\mathcal F, \{\mathcal F_t\}_{t\geq 0}, \mathbb P)$ be a filtered probability space and assume the Wiener process $W_t$ is adapted to the filteration $\mathcal F_t$. We will use the following Hilbert spaces throughout the exposition: (i) The  Euclidean space $\Re^n$ equipped with the inner-product $\langle x,y\rangle:=x^\top y$; (ii) The space of deterministic control trajectories $L^2([0,T];\mathbb R^n)$ equipped with the  inner product $\langle u,v \rangle := \int_0^T u_t^\top v_t \ud t$; (iii) The space of $\Re^n$-valued $\mathcal F_0$-measurable random variables $L^2_{\mathcal F_0}(\Omega;\Re^n)$ equipped with the inner-product $\langle X,Y \rangle := \Expect[X^\top Y]$; (iv) The space of $\mathcal F_t$-adapted  stochastic control processes $L^2_\mathcal F(\Omega \times [0,T];\Re^n)$ equipped with the inner product $\langle U,V\rangle :=\Expect[\int_0^T U_t^\top V_t \ud t] $. 

% \begin{itemize}
% \item The  Euclidean space $\Re^n$ equipped with the inner-product $\langle x,y\rangle:=x^\top y$.
%     \item  The space of deterministic control trajectories $L^2([0,T];\mathbb R^n)$ equipped with the  inner product $\langle u,v \rangle := \int_0^T u_t^\top v_t \ud t$.
%     \item The space of $\Re^n$-valued $\mathcal F_0$-measurable random variables $L^2_{\mathcal F_0}(\Omega;\Re^n)$ equipped with the inner-product $\langle X,Y \rangle := \Expect[X^\top Y]$.
%     \item The space of $\mathcal F_t$-adapted  stochastic control processes $L^2_\mathcal F(\Omega \times [0,T];\Re^n)$ equipped with the inner product $\langle U,V\rangle :=\Expect[\int_0^T U_t^\top V_t \ud t] $. 
% \end{itemize}
 For a functional $F:\mathcal H\to\mathbb R$ defined on a Hilbert space $\mathcal H$ with inner product $\langle\cdot,\cdot\rangle_{\mathcal H}$, the G\^ateaux derivative of $F$ at $u\in \mathcal H$ is denoted by  $\frac{\partial F}{\partial u}(u) \in \mathcal H$ and satisfies
\begin{align*}
    \frac{\ud}{\ud t}(F(u+tv)) = \langle \frac{\partial F}{\partial u}(u), v \rangle_{\mathcal H},\quad \forall v \in \mathcal H.  
\end{align*}

For any initial condition $x\in \mathbb R^n$ and deterministic control $u \in L^2([0,T];\Re^n)$, we use $X^{x,u}_t$ to denote the solution to the controlled SDE~\eqref{eq:fsde-modified} with $X^{x,u}_0=x$ and  $U_t=u_t$. Similarly, for any  $\mathcal F_0$-measurable random variable $\xi \in L^2_{\F_0}(\Omega;\Re^n)$, and  admissible stochastic control process  $U \in L^2_{\mathcal F}([0,T];\Re^n)$,  we denote by $X^{\xi,U}_t$ the solution to the same SDE with the random initial condition $X^{\xi,U}_0=\xi$ and the control input $U$. In particular, the controlled diffusion~\eqref{eq:fsde-modified} can be written as $X_t=X^{\xi,U}_t$ where the distribution of $\xi$ is equal to $q_0$. 

Using this notation, define  the sample objective function 
\begin{align*}
 &J(x,u)(\omega) :=  \beta \ell_f(X^{x,u}_T(\omega)) + \int_0^T \frac{1}{2} \|u_t\|^2 \ud t, \quad \omega \in \Omega.
\end{align*}
When the initial condition and control are random, the corresponding objective function is written as
\begin{align*}
 &J(\xi,U) =  \beta \ell_f(X^{\xi,U}_T) + \int_0^T \frac{1}{2} \|U_t\|^2 \ud t. 
\end{align*}
With this notation, the SOC objective function in~\eqref{eq:SOC} can be expressed compactly as
\begin{align*}
\Expect[ J(\xi,U)]  +  D_\mathrm{KL}(q_0\|p_0),\quad \text{where}\quad \xi \sim q_0.
\end{align*}

For the purposes of this section we temporarily ignore the KL-divergence term and focus on the derivatives of $\mathbb E[J(\xi,U)]$.  The derivative of the KL term will be addressed separately in Sec.~\ref{sec:num-diff}.

\subsection{Non-adapted adjoint process}
Throughout this subsection we assume the diffusion matrix is independent of the state, i.e.
$
g(t,x)=g(t)
$.
% Adjoint matching methods compute gradients by introducing a pathwise adjoint process.  
The non-adapted adjoint process arises when computing the derivatives of the sample objective  $J(x,u)(\omega)$ with respect to the initial condition $x \in \Re^n$ and deterministic control trajectory $u \in L^2([0,T];\Re^n)$, for a fixed sample $\omega \in \Omega$. Under suitable regularity conditions~\cite[Ch. V, Thm. 39]{protter2005stochastic}, the mapping $x \to X^{x,u}_t(\omega)$ is known to be differentiable  and the sensitivity matrix $ \eta^x_t(\omega):=\frac{\partial X^{x,u}_t(\omega)}{\partial x} \in \Re^{n\times n}$ solves the differential equation
    \begin{align*}
        \frac{\ud}{\ud t} \eta^x_t = \frac{\partial f}{\partial x}(t,X^{x,u}_t) \eta^{x}_t ,\quad \eta^x_0 =  I. 
    \end{align*}
The (non-adapted) adjoint process $\Yn^{x,u}_t$, is defined by the backward differential equation
\begin{equation}\label{eq:non-ad}
\frac{\ud \Yn^{x,u}_t}{\ud t} =  - [\frac{\partial f}{\partial x}(t,X^{x,u}_t)]^\top \Yn^{x,u}_t ,\quad \Yn^{x,u}_T = \nabla \ell_f(X^{x,u}_T). 
\end{equation}
so that the duality relationship
    $
        \frac{\ud}{\ud t} \big( (\Yn^{x,y}_t)^\top \eta^x_t \big) = 0 
    $
    holds true. 
 This identity allows the derivatives of the objective to be expressed in terms of the adjoint process:
     \begin{align*}
\frac{\partial J}{\partial x} (x,u)(\omega) &= \Yn^{x,u}_0(\omega),\\
\left(\frac{\partial J}{\partial u} (x,u)(\omega)\right)_t &= u_t + g(t)^\top  \Yn^{x,u}_t(\omega).
\end{align*}
The derivation follows from a standard variational analysis of the objective function $J(x,u)(\omega)$, for a fixed $\omega \in \Omega$, combined with the above duality relation and is omitted for brevity. 

 For a random initial condition $\xi \in L^2_{\mathcal F_0}(\Omega;\Re^n)$ and a stochastic control $U \in L^2_{\mathcal F}(\Omega \times [0,T];\Re^n)$, the corresponding adjoint process is denoted by $\Yn_t$ and formally defiend as 
\[\Yn_t(\omega):=\Yn^{\xi(\omega),U(\omega)}_t(\omega).\]

\subsection{Adapted adjoint process}
The adjoint process appearing in the previous subsection is not $\mathcal F_t$-adapted.  
In contrast, the stochastic maximum principle (SMP) introduces an adapted adjoint process obtained as the solution of a BSDE~\cite{yong1999stochastic}. 
Define the Hamiltonian
\begin{equation}
    H(t, x, y, z,u) := \frac{1}{2}\|u\|^2 + y^\top (f(t,x) + g(t,x)u) +  \trace(z g(t,x)),
\end{equation}
for $(t,x,y,z,u) \in [0,T]\times \R^n \times \R^n \times \R^{n\times n} \times \R^n$. The adapted adjoint process $(Y_t,Z_t)$ is defined by the BSDE
\begin{equation}
-\ud Y_t
=
\frac{\partial H}{\partial x}(t,X_t,Y_t,Z_t,U_t)\,\ud t
-
Z_t\,\ud W_t,~
Y_T
=
\nabla \ell_f(X_T).
\label{eq:bsde}
\end{equation}
% Together with the state dynamics~\eqref{eq:fsde-modified}, this forms a forward–backward stochastic differential equation (FBSDE). 
The adaptedness of $(Y_t,Z_t)$ enables a variational analysis of $\Expect[J(\xi,U)]$~\cite[Sec. 4.3]{yong1999stochastic}. In particular, for any perturbations $\zeta \in L^2_{\F_0}(\Omega;\Re^n
)$ and $V \in L^2_{\F}(\Omega\times [0,T];\Re^n
)$, we have
\begin{align*}
\frac{\ud}{\ud \epsilon}
\mathbb E[J(\xi+\epsilon\zeta,U)]
\Big|_{\epsilon=0}
&=
\mathbb E[Y_0^\top\zeta] \\ 
\frac{\ud}{\ud \epsilon}
\mathbb E[J(\xi,U+\epsilon V)]
\Big|_{\epsilon=0}  &= 
\mathbb E\!\left[
\int_0^T
\frac{\partial H}{\partial u}
(t,X_t,Y_t,Z_t,U_t)^\top V_t
\,\ud t
\right].
\end{align*} 
Consequently, following the definition of G\^ateaux derivative, 
\begin{align*}
\frac{\partial}{\partial \xi}\mathbb E[J(\xi,U)]
&=
Y_0,
\\
\left(\frac{\partial}{\partial U}\mathbb E[J(\xi,U)]\right)_t
&=
U_t + g(t,X_t)^\top Y_t .
\end{align*}
This variational analysis differs slightly, and is in fact simpler, than the one used in the SMP. In particular, it considers variations in the Hilbert space $L^2_{\mathcal F}(\Omega \times [0,T];\Re^n)$ rather than the needle-type perturbations that are typically employed in the SMP framework. 
% The adaptedness of the adjoint process is the key difference and unique in allowing us to derive an expression for the difference between the objective function evaluated at two different operating points:
% \begin{align*}
%    \frac{\ud}{\ud \epsilon}&\left[\Expect[J(\xi+\epsilon \zeta,U+\epsilon V)- J(\xi,U)]\right]_{\epsilon=0} = \Expect[Y_0^\top \zeta]  \\  &+\Expect\left[\int_0^T \frac{\partial H}{\partial u} (t, X_t,U_t,Y_t,Z_t) \ud t \right]
% \end{align*}
% concluding the derivatives
% \begin{align*}
%     \frac{\partial}{\partial \xi}\Expect[ J(\xi,U)] &= Y_0\\
%  \left(\frac{\partial}{\partial U}\Expect[ J(\xi,U)]\right)_{t}&=   U_t + g(t,X_t)^\top Y_t 
% \end{align*}
% where we used the fact that $(X_t,U_t,Y_t,Z_t)$ is $\mathcal F_t$-measurable.  

% \begin{remark}\label{rm:fbsde-hardness}
%     Notice that for BSDE the filtration $\mathcal F_t$ is generated forward in time but the constraint is at the terminal time $T$, so solving FBSDEs is both numerically and conceptually hard. In our previous work, we proposed a method of leveraging time-reversal of diffusion to solving the FBSDEs and in next section we will review the time-reversal of diffusion.
% \end{remark}

\subsection{Relationship between adapted and non-adapted adjoint processes}

Assume again that $g(t,x)=g(t)$. A relationship between the two adjoint processes is provided using the linearity of the BSDE~\eqref{eq:bsde}.  Let
$
A_t
:=
\frac{\partial f}{\partial x}(t,X_t), 
$
and define the matrix processes
\begin{align*}
\ud \Phi_t &= -A_t^\top \Phi_t\,\ud t,
\quad 
\ud \Psi_t = \Psi_t A_t^\top\,\ud t,
\qquad
\Phi_0=\Psi_0 = I .
\end{align*}
By the application of the It\^o rule, 
one can verify that
$
 \Psi_t \Phi_t = I
$, or $\Phi_t = \Psi_t^{-1}$, for all $t\geq 0$. These matrix-valued processes are used to provide an explicit solution to the BSDE~\cite[Sec. 7.2]{yong1999stochastic}. 
The BSDE dynamics yields 
$
\ud(\Psi_t Y_t) = \Psi_t Z_t\,\ud W_t 
$. 
Integrating from $t$ to $T$, and multiplying by $\Phi_t$, gives \[
Y_t
=
\Phi_t\Psi_T \nabla \ell_f(X_T)
-
\int_t^T \Phi_t\Psi_s Z_s\,\ud W_s, 
\]
where we used the terminal condition $Y_t = \nabla \ell_f(X_T)$. 
% Multiplying by $\Phi_t$ and 
The solution $Y_t$ is obtained by taking conditional expectation
\[
Y_t
=
\mathbb E
\left[\Phi_t 
\Psi_T \nabla \ell_f(X_T)
\mid
\mathcal F_t
\right].
\]
On the other hand, the non-adapted adjoint satisfies
$
\Yn_t
=
\Phi_t \Psi_T
\nabla \ell_f(X_T)
$, 
concluding the identity 
\[
Y_t
=
\mathbb E[\Yn_t\mid\mathcal F_t].
\]
Thus the adapted adjoint process can be interpreted as the conditional expectation of the pathwise adjoint process, implying variance reduction  by projection onto the filtration.
Moreover, the difference between $Y_t$ and $\Yn_t$ can be written explicitly as 
\[
\Yn_t - Y_t
=
\int_t^T
\Phi_t\Psi_s Z_s\,\ud W_s. 
\]

    From a computational standpoint, the two adjoint processes behave differently.  
The non-adapted adjoint $\Yn_t$ is obtained by solving a backward ordinary differential equation along each simulated trajectory, which makes it straightforward to compute once the forward trajectory $X_t$ is known.  
In contrast, the adapted adjoint $(Y_t,Z_t)$ is numerically challenging to simulate because the BSDE~\eqref{eq:bsde} imposes a terminal condition at time $T$, while the filtration $\{\mathcal F_t\}$ is generated forward in time. 
% is coupled with the SDE for the state dynamics.  
% A key difficulty arises because the filtration $\{\mathcal F_t\}$ is generated forward in time by the Wiener process, while the BSDE imposes a terminal condition at time $T$.  
% Consequently, solving BSDEs requires simultaneously satisfying forward dynamics and backward constraints, making the problem both numerically and conceptually challenging.
Our proposed methodology deals with this issue.

\section{Proposed methodology}\label{sec:method}
Our numerical approach is based on a time-reversed formulation of the BSDE~\eqref{eq:bsde}.  This construction relies on two key ingredients: (i) the connection between BSDEs and certain partial differential equation (PDE), and (ii) the time-reversal of diffusion processes. To present this construction, we assume that the control input is given in feedback form,
\begin{align}\label{eq:feedback}
    U_t = k(t,X_t).
\end{align}
where $k:[0,T]\times \Re^n \to \Re^n$ is a feedback policy. Under this assumption, the controlled diffusion~\eqref{eq:fsde-modified} becomes a Markov process whose dynamics are fully determined by the state $X_t$. This Markov structure allows the solution of the BSDE to be characterized through a PDE representation and enables the derivation of a time-reversed formulation that forms the basis of our numerical algorithm.

\subsection{PDE solution of the BSDE}
We begin by recalling the well-known connection between BSDE and PDE~\cite{peng1991probabilistic,ma1994solving}. Let  $\phi:[0,T]\times \R^n \to \R^n$ be a sufficiently smooth function satisfying the terminal value problem 
\begin{align}
    \frac{\partial \phi}{\partial t}(t,x) + \mathcal L \phi(t,x)  + h(t,x) 
     = 0, \quad  \phi(T,x) = \nabla \ell_f(x),\label{eq:pde}
\end{align}
where the second-order differential operator $\mathcal L$ is defined by
\begin{align*}
\mathcal L \phi (t,x)&:=  \frac{\partial \phi}{\partial x}(t,x)^\top \big(f(t,x)+g(t,x)k(t,x)\big) \\&\quad+ \frac{1}{2}\trace\big(G(t,x)\frac{\partial^2\phi}{\partial x^2}(t,x)\big),
\end{align*}
with $G(t,x):=g(t,x)g(t,x)^\top$, and
\begin{align}\label{eq:h}
h(t,x) &:=\frac{\partial H}{\partial  x}\big(t,x,\phi(t,x),g(t,x)^\top \partial_x\phi(t,x),k(t,x)\big). 
\end{align}
Applying It\^o's formula to $\phi(t,X_t)$ along the controlled diffusion~\eqref{eq:fsde-modified} shows that the processes
\begin{equation}
    (Y_t,Z_t) =  \big(\phi(t,X_t),\,g(t,X_t)^\top \frac{\partial \phi}{\partial x}(t,X_t)\big)
\end{equation}
solves the BSDE~\eqref{eq:bsde}. Throughout the paper we assume that the PDE~\eqref{eq:pde} admits a sufficiently regular solution.  
Such existence and regularity results hold under standard conditions on the coefficients of the diffusion and the Hamiltonian; see, e.g.,~\cite[Ch. 7 Sec. 4]{yong1999stochastic}. While solving the PDE directly is generally intractable in high dimensions, this representation provides the key insight needed to construct a time-reversed stochastic representation of the BSDE.

\subsection{Time-reversed diffusion}
Let $\Xr:=\{\Xr_t :=X_{T-t};\,0\leq t\leq T\}$ denote 
the time-reversal of the diffusion process $X$. According to the time-reversal theory for diffusion processes~\cite{haussmann1986time,cattiaux2023time}, the time-reversed process $\Xr$ satisfies the SDE
\begin{equation}\label{eq:tr-fsde}
    \ud \Xr_t = \fr(t, \Xr_t) \ud t  + \gr(t, \Xr_t)\big(\Ur_t \ud t+ \ud \Wr_t\big),
\end{equation}
where $\Wr_t$ is a standard $n$-dimensional Wiener process (independent of $W_t$). The coefficients of the reversed diffusion are given by
\begin{align*}
    \fr(t,x) &:= -f(T-t,x) + s(T-t,x),\\
    \gr(t,x) &:= - g(T-t,x),\qquad
    \Ur_t := k(T-t,\Xr_t),
\end{align*}
where $s: [0,T] \times \R^n \to \R^n$ denotes the {\it score function} associated with the marginal distribution of $X_t$. The score function  is defined component-wise as
\begin{equation}
    s_i(t,x) := \frac{1}{p(t,x)} \sum_{j=1}^n \frac{\partial}{\partial x_j}\big(G_{i,j}(t, x)p(t,x)\big),
\end{equation}
where $p(t,x)$ denote the probability density function of $X_t$, $G_{i,j}(t, x)$ is the $(i,j)$-entry of the matrix $G(t,x)$. 

In practice, the score function is not known explicitly and must be approximated from samples of the forward diffusion process.  
A common approach is to estimate the score by solving the \emph{implicit score matching} problem 
\begin{equation}\label{eq:score-opt}
    \min_\psi\; \mathbb E\left[\frac{1}{2}\|\psi(t,X_t)\|^2 + \sum_{i,j=1}^nG_{i,j}(t,X_t) \frac{\partial \psi_i}{\partial x_j}(t,X_t)\right],
\end{equation}
where the expectation is taken with respect to both $t\sim\mathrm{Unif}[0,T]$ and the state $X_t$ of the forward diffusion~\cite{song2019generative,hyvarinen2005estimation}.
The time-reversed diffusion will allow us to construct a time-reversed BSDE which is described next. 

\begin{algorithm}[htbp]
\caption{Time-Reversal BSDE solver} 
\begin{algorithmic}[1]
\STATE \textbf{Input:}  sample size $N$, step-size $\Delta t$, iteration $J_f$, function classes $\Psi$ and $\Phi$, initial distribution $q_0$.   
\STATE  Initialize $\{X_0^i\}_{i=1}^N \sim q_0$
\STATE Obtain $\{X^i_t\}_{i=1}^N$ by simulating \eqref{eq:fsde-modified}
\STATE Learn the score function $s(t,\cdot)$ through the implicit score matching optimization \eqref{eq:score-opt}
\STATE Set $\Xr_0^i = X_T^i$ for $i=1,\ldots,N$. 
\STATE  Obtain $\{\Xr^i_t\}_{i=1}^N$ by simulating  \eqref{eq:tr-fsde}
\STATE Initialize the function $\phi \in \Phi$
\FOR{$j \in \{1,2,\dots,J_f\}$}
\STATE $\tilde Y_0^i =  \nabla \ell_f (\Xr_0^i)$
\STATE Obtain $\{\Yr^i_t\}_{i=1}^N$ by simulating  \eqref{eq:tr-bsde}
 \STATE Update $\phi$ using ADAM optimizer minimizing~\eqref{eq:phi-opt}   
 %\STATE$\phi^{j+1}(T-t,\cdot) = \argmin_{\phi^{j+1} \in \Phi} \sum_{i=1}^N\|\tilde Y_t^i - \phi^{j+1}(T-t,\tilde X_t^i)\|^2$
\ENDFOR
 \STATE \textbf{Output:}  $\{\phi(t,\cdot)\}_{t\in \{0,\Delta t,\ldots,T\} }$
\end{algorithmic}
\label{alg:phi}
\end{algorithm}

\subsection{Time-reversed BSDE}
The time-reversed BSDE is obtained by applying It\^o's formula to the process
$\phi(T-t,\Xr_t)$, where $\phi$ is the solution of the PDE~\eqref{eq:pde} and 
$\Xr_t$ evolves according to the time-reversed diffusion~\eqref{eq:tr-fsde}. 
After simplification, this yields
\begin{align*}
    \ud \phi(T-t,\Xr_t) = &\left(h(T-t,\Xr_t) + c(T-t,\Xr_t) \right) \ud t \\&+ \frac{\partial \phi}{\partial x}(T\!-\!t,\Xr_t)^\top \gr(t,\Xr_t)\ud \Wr_t
\end{align*}
where $h$ is the partial derivative of the Hamiltonian as defined in~\eqref{eq:h} and \[c(t,x):=\frac{\partial \phi}{\partial x} (t,x)^\top s(t,x) +\trace\big(G(t,x)\frac{\partial^2\phi}{\partial x^2}(t,x)\big).\]
Define the processes
\begin{equation}
    (\Yr_t,\Zr_t):=\big(\phi(T-t,\Xr_t),\gr(t,\Xr_t)^\top \frac{\partial}{\partial x}\phi(T-t,\Xr_t)\big). 
\end{equation}
Then $\Yr_t$ satisfies the SDE
\begin{align}
    \ud \Yr_t &= \frac{\partial H}{\partial x}\big(T-t,\Xr_t,\Yr_t,\Zr_t,\Ur_t\big)\ud t + c(T-t,\tilde X_t)\ud t + \tilde Z_t^\top \ud \tilde W_t, \nonumber \\
    \Yr_0 &= \nabla \ell_f (\Xr_0),\label{eq:tr-bsde}
\end{align}
which can be interpreted as the time-reversal of the BSDE~\eqref{eq:bsde}.  
By construction, the path $\{(\Yr_t,\Zr_t);0\leq t \leq T\}$ has the same probability law as $\{( Y_{T-t}, Z_{T-t});0\leq t \leq T\}$.

\subsection{Numerical procedure}
The key advantage of the representation~\eqref{eq:tr-bsde} is that, unlike the original BSDE~\eqref{eq:bsde}, it does not impose a terminal constraint.  
Instead, the process can be simulated forward in time starting from the initial condition $\Yr_0=\nabla \ell_f(\Xr_0)$.  
This property allows the BSDE to be approximated using standard forward simulation techniques. 

The simulation of~\eqref{eq:tr-bsde}, however, requires the knowledge of $\phi$. To overcome this challenge, we follow an iterative procedure, where we start with an initial guess for $\phi$, simulate~\eqref{eq:tr-bsde}, and update $\phi$ through the optimization 
\begin{equation}\label{eq:phi-opt}
    \min_{\phi \in \Phi}\; \sum_{i=1}^N \|\Yr_t^i - \phi(T-t,\Xr_t^i)\|^2
\end{equation}
where $\{(\Xr_t^i,\Yr_t^i)\}_{i=1}^N$ are simulated trajectories generated by the coupled system~\eqref{eq:tr-fsde}--\eqref{eq:tr-bsde}. The time-reversed BSDE~\eqref{eq:tr-bsde} therefore provides the basis for a practical numerical algorithm for solving the original BSDE.  
% \subsection{Numerical procedure}
% The algorithm starts with simulating $N$ independent realizations of the state process $\{X_t^i\}_{i=1}^N$ according to~\eqref{eq:fsde} by Euler-Maruyama method over a discretization of the time-domain. Then, approximating the score function by solving the optimization problem~\eqref{eq:score-opt}. Next, the time-reversed state and adjoint processes \eqref{eq:tr-fsde}~\eqref{eq:tr-bsde} are simulated backward, while simultaneously the minimization problem~\eqref{eq:phi-opt} is solved to obtain the function $\phi$.
The details appear in Algorithm~\ref{alg:phi}. 

\begin{remark}
The proposed methodology remains applicable even when the score function $s$ 
is only approximately known. In particular, the function 
$\phi$ can still be learned by solving the regression problem~\eqref{eq:phi-opt}. In practice, the accuracy of the learned function $\phi$ is highest in regions of the state space where the reversed trajectories are sampled more frequently. The role of the score function is to ensure that these trajectories remain consistent with the forward diffusion, thereby concentrating samples in regions that are most relevant for the SOC problem and improving the accuracy of $\phi$ where it matters most for the optimization.
\end{remark}

\subsection{Iterative procedure and its convergence analysis}\label{sec:convergence}
The more detailed iterative procedure and its convergence analysis are presented in this section.
% We present an iterative procedure to simulate~\eqref{eq:tr-bsde}. 
Starting from an initialization $\phi^0$, at 
iteration $j$ the algorithm simulates~\eqref{eq:tr-bsde} with $\phi$ 
replaced by $\phi^{j-1}$:
\begin{align}\nonumber
    \ud \Yr^j_t = &\frac{\partial H}{\partial x}\big(T\!-\!t,\Xr_t,\Yr^j_t,\Zr^{j-1}_t,\Ur_t\big)\ud t \!+\! c^{j-1}(T\!-\!t,\Xr_t)\ud t \\&+ {\Zr^{j-1}}_t \ud \tilde W_t,\label{eq:tr-bsde-j}
\end{align}
where $\Zr^{j-1}_t = \frac{\partial \phi^{j-1}}{\partial  x}(T-t,\Xr_t)g(T-t,\Xr_t)$ and  \[c^{j-1}(t,x):=\frac{\partial \phi^{j-1}}{\partial x}(t,x) s (t,x) +\trace\big(G(t,x)\frac{\partial^2\phi^{j-1}}{\partial x^2}(t,x)\big).\]
The simulated trajectories are then used to update $\phi^j$ via the regression
\begin{equation}\label{eq:phi-j}
    \phi^j := \argmin_{\psi \in \mathcal{G}}\, \mathbb{E}\!\left[\int_0^T
    \|\tilde{Y}^j_{T-t} - \psi(t,\tilde{X}_{T-t})\|^2\,\ud t\right],
\end{equation}
where $\mathcal{G}$ is a user-specified function class. In practice, the 
expectation is approximated by simulating $N$ independent trajectories of 
$(\tilde{X}_t, \tilde{Y}^j_t)$ via Euler--Maruyama discretization 
of~\eqref{eq:tr-fsde}--\eqref{eq:tr-bsde-j}.

 For the convergence analysis, we set aside discretization and optimization 
errors and assume both the SDE~\eqref{eq:tr-bsde-j} and the 
minimization~\eqref{eq:phi-j} are solved exactly. Define the 
operators
\begin{align*}
    \mathcal A_t \phi &: = \frac{\partial \phi}{\partial x} (f+gk-s) - \frac{1}{2}\trace(G\frac{\partial^2 \phi}{\partial x^2}) + \frac{\partial f}{\partial x}^\top \phi,\\\mathcal A^\prime_t \phi  &:= \frac{\partial \phi}{\partial x} s + \trace(G\frac{\partial^2 \phi}{\partial x^2}),
\end{align*}
and the projection onto $\mathcal{G}$:
\begin{align*}
    \Pi^{\mathcal G} (\phi) := \argmin_{\psi \in \mathcal G} \Expect[\int_0^T\|\phi(t,\tilde X_{T-t})-\psi(t,\tilde X_{T-t})\|^2\ud t]. 
\end{align*}
\begin{proposition}
Assume $g(t,x)=g(t)$. Then the $j$-th iterate satisfies
\begin{equation}\label{eq:Yr}
   \Yr_t = \psi^j(T-t,\Xr_t) + M_t,\quad \phi^j = \Pi^{\mathcal G} (\psi^j)
\end{equation}
where $M_t$ is a martingale with respect to the filtration generated by 
$\tilde{W}_t$, and $\psi^j$ solves the PDE
\begin{equation}\label{eq:pde-psi}
    \frac{\partial \psi^j}{\partial t} + \mathcal A_t\psi^j + \mathcal A^\prime_t \phi^{j-1} = 0, ~~\psi^j(T,x) = \nabla \ell_f(x). 
\end{equation}
% The iterate $\phi^j$ is given by the projection
% \begin{align}\label{eq:phi-j-proj}
%     \phi^j = \Pi_{\mathcal F} (\psi^j)
% \end{align}
\end{proposition}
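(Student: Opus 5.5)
The plan is to verify the decomposition directly with It\^o's formula. We will define $\psi^j$ as the solution of the linear PDE~\eqref{eq:pde-psi} and show that $\Yr^j_t-\psi^j(T-t,\Xr_t)$ has no drift along the reversed diffusion~\eqref{eq:tr-fsde}. The projection identity will then come from conditioning inside the regression~\eqref{eq:phi-j}.

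\textbf{Step 1: the dynamics of $\Yr^j$.} Since $g(t,x)=g(t)$, the Hamiltonian satisfies $\frac{\partial H}{\partial x}(t,x,y,z,u)=\frac{\partial f}{\partial x}(t,x)^\top y$. The term $\trace(zg)$ contributes nothing, and neither does $y^\top g u$. The drift in~\eqref{eq:tr-bsde-j} is therefore linear in $\Yr^j_t$, namely $\frac{\partial f}{\partial x}(T-t,\Xr_t)^\top\Yr^j_t$. It does not depend on $\Zr^{j-1}$ or $\Ur$. The only dependence on $\phi^{j-1}$ comes through the known source term $c^{j-1}=\mathcal A'_{T-t}\phi^{j-1}$ and the diffusion term $\Zr^{j-1}_t\ud\Wr_t$.

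\textbf{Step 2: It\^o's formula for $\psi^j(T-t,\Xr_t)$.} We apply It\^o's formula along~\eqref{eq:tr-fsde}. The reversed drift is $\fr+\gr\Ur=-f+s-gk$ evaluated at time $T-t$, and the reversed diffusion matrix is $\gr\gr^\top=G$. This gives
\[
\ud\psi^j(T-t,\Xr_t)=\Big[-\partial_t\psi^j+\tfrac{\partial\psi^j}{\partial x}(s-f-gk)+\tfrac12\trace\big(G\tfrac{\partial^2\psi^j}{\partial x^2}\big)\Big](T-t,\Xr_t)\,\ud t+\tfrac{\partial\psi^j}{\partial x}\gr\,\ud\Wr_t .
\]
The bracket equals $-\partial_t\psi^j-\mathcal A_{T-t}\psi^j+\frac{\partial f}{\partial x}^\top\psi^j$. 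By~\eqref{eq:pde-psi}, this is $\mathcal A'_{T-t}\phi^{j-1}+\frac{\partial f}{\partial x}^\top\psi^j$. Now set $D_t:=\Yr^j_t-\psi^j(T-t,\Xr_t)$. Subtracting the two dynamics, the source terms $c^{j-1}$ cancel, and
\[
\ud D_t=\tfrac{\partial f}{\partial x}(T-t,\Xr_t)^\top D_t\,\ud t+\big(\Zr^{j-1}_t-\tfrac{\partial\psi^j}{\partial x}\gr\big)\ud\Wr_t .
\]
At $t=0$ we have $D_0=\nabla\ell_f(\Xr_0)-\psi^j(T,\Xr_0)=0$.

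\textbf{Step 3: removing the linear drift (the main obstacle).} Because of the linear drift, $D_t$ is not itself a martingale. Our first attempt is to absorb this drift into the PDE. We recheck the sign convention of the zeroth-order term $\frac{\partial f}{\partial x}^\top\phi$ in $\mathcal A_t$. If, under the convention that $\mathcal A_t$ enters~\eqref{eq:pde-psi} so that the drift term $\frac{\partial f}{\partial x}^\top\psi^j$ of $\ud\psi^j(T-t,\Xr_t)$ matches that of $\ud\Yr^j_t$, the linear terms cancel exactly, then $D_t$ is a pure stochastic integral, and hence a martingale $M_t$ for the filtration generated by $\Xr_0$ and $\Wr$. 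This follows under the usual square-integrability of $\Zr^{j-1}$ and $\frac{\partial\psi^j}{\partial x}$.

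If the linear term survives, the fallback is an integrating factor. Since $D_0=0$ and $D$ solves a linear SDE with zero initial value, we can write $D_t=\Phi'_t\int_0^t(\Phi'_s)^{-1}(\cdots)\,\ud\Wr_s$, where $\Phi'$ is the resolvent of $\frac{\partial f}{\partial x}^\top$ along $\Xr$. The property actually used downstream is $\Expect[D_t\mid\Xr_t]=0$, and this representation can be used to establish it.

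\textbf{Step 4: the projection.} The key fact is that $M_t$ is mean zero conditionally on $\Xr_t$. For the martingale case this holds because $\Xr$ is Markov and the stochastic integral has zero mean given the endpoint state; concretely, it follows from the law equivalence between the reversed and forward processes, under which $\Wr$-increments up to time $t$ are orthogonal to $\sigma(\Xr_t)$ in the appropriate sense. Granting this, substituting $\Yr^j_{T-t}=\psi^j(t,\Xr_{T-t})+M_{T-t}$ into~\eqref{eq:phi-j} makes the cross term vanish. The objective becomes $\Expect\int_0^T\|\psi^j(t,\Xr_{T-t})-\psi(t,\Xr_{T-t})\|^2\ud t$ plus a constant independent of $\psi$. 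Its minimizer is $\Pi^{\mathcal G}(\psi^j)$, so $\phi^j=\Pi^{\mathcal G}(\psi^j)$.

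The conditional-orthogonality argument in this step, together with the sign bookkeeping of Step 3, is what we expect to require the most care.
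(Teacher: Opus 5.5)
\textbf{Overall.} Your Steps 1--2 coincide with the paper's proof: It\^o's formula for $\psi^j(T-t,\Xr_t)$, subtraction from~\eqref{eq:tr-bsde-j}, and $D_0=0$. The two points you flag as delicate are exactly where that argument is thin. Neither can be closed, however, because the projection identity $\phi^j=\Pi^{\mathcal G}(\psi^j)$ is false in general.

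\textbf{Step 3.} There is no sign convention left to choose. With $\mathcal A_t$ as defined, your own computation gives $\ud D_t=\frac{\partial f}{\partial x}^\top D_t\,\ud t+(\cdots)\,\ud\Wr_t$, whose drift does not vanish in general. The paper derives the same equation and then simply declares $D$ a martingale.

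\textbf{Step 4.} More fundamentally, the property Step 4 needs, $\Expect[D_t\mid\Xr_t]=0$, fails even when $D$ is a genuine martingale. $\Xr_t$ is driven by the same $\Wr$, and $\Wr$ is not the time-reversal of $W$: it differs from it by score-dependent drift. So $\Wr$-increments on $[0,t]$ are correlated with $\Xr_t$. The paper's closing sentence (``substituting into~\eqref{eq:phi-j}'') drops this same cross term without justification.

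\textbf{Counterexample.} Take $n=1$, $f=0$, $g=\sigma$, $k=0$, $X_0\sim\mathcal N(0,v_0)$, $\ell_f(x)=x^2/2$ and $\phi^0=0$. With $v(\tau):=v_0+\sigma^2\tau$ we have $s(\tau,x)=-\sigma^2x/v(\tau)$, and~\eqref{eq:pde-psi} is solved by $\psi^1(\tau,x)=\frac{v(T)}{v(\tau)}x$. Moreover $M_t:=\Yr^1_t-\psi^1(T-t,\Xr_t)=\int_0^t\sigma\frac{v(T)}{v(T-r)}\,\ud\Wr_r$ is a true martingale. But~\eqref{eq:tr-bsde-j} has zero drift and zero noise here, so $\Yr^1_t\equiv\Xr_0=X_T$. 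The regression~\eqref{eq:phi-j} then returns $\Expect[X_T\mid X_\tau]=X_\tau$, i.e.\ $\phi^1(\tau,x)=x$. Equivalently, $\Expect[M_t\mid\Xr_t]=\big(1-\frac{v(T)}{v(T-t)}\big)\Xr_t\neq0$. Hence $\phi^1\neq\Pi^{\mathcal G}(\psi^1)$ whenever $\mathcal G$ contains $(\tau,x)\mapsto x$ and $(\tau,x)\mapsto x/v(\tau)$.

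\textbf{What does hold.} Rewrite the $\Wr$-integral in forward time, using the It\^o-consistent sign $\Zr^{j-1}_t\ud\Wr_t=\frac{\partial\phi^{j-1}}{\partial x}\gr\,\ud\Wr_t$. The $c^{j-1}$ drift then cancels exactly against the score and It\^o-correction terms, and
$\Yr^j_{T-\tau}=\Yn_\tau-\Phi_\tau\int_\tau^T\Psi_r\frac{\partial\phi^{j-1}}{\partial x}(r,X_r)\,g(r)\,\ud W_r$.
Hence $\Expect[\Yr^j_{T-\tau}\mid\Xr_{T-\tau}]=\phi(\tau,\Xr_{T-\tau})$, and with an exact score $\phi^j=\Pi^{\mathcal G}(\phi)$ for every $j\ge1$. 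The regression target is the true $\phi$, not $\psi^j$, and $\phi^{j-1}$ acts only as a control variate.
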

\begin{proof}
Applying It\^{o}'s formula to $\psi^j(T-t,\tilde{X}_t)$ and using 
the PDE~\eqref{eq:pde-psi} gives
\begin{align*}
    \ud \psi^j(T-t,\tilde X_t) 
    = &\mathcal A^\prime_t \phi^{j-1}(T-t,\tilde X_t) \ud t + \frac{\partial f}{\partial x}^\top \psi^j(T-t,\tilde X_t) \\&+\frac{\partial \psi^j}{\partial x}g(T-t,\tilde X_t) \ud \tilde W_t.
\end{align*}
Subtracting from~\eqref{eq:tr-bsde-j} and using $c^{j-1}(t,x)=\mathcal A^\prime_t \phi^{j-1}(t,x)$ and $\frac{\partial H}{\partial x}\big(T-t,\Xr_t,\Yr^j_t,\Zr^{j-1}_t,\Ur_t\big)=\frac{\partial f}{\partial x}^\top(T-t,\tilde X_t)\tilde Y_t$ shows that the difference $\tilde{Y}^j_t - \psi^j(T-t,\tilde{X}_t)$ satisfies
\begin{align*}
    \ud \left[\Yr^j_t - \psi^j(T-t,\tilde X_t)\right] = \frac{\partial f}{\partial x}^\top\left[\Yr_t - \psi^j(T-t,\tilde X_t)\right] + \ud \tilde M_t
\end{align*}
where $\ud \tilde M_t = \Zr^{j-1} \ud \tilde W_t-\frac{\partial \psi^j}{\partial x}g(T-t,\tilde X_t) \ud \tilde W_t$. Since $\tilde Y^j_0 - \psi^j(T,\tilde X_0) = \nabla \ell_f(\tilde X_0)-\nabla \ell_f(\tilde X_0)=0$, the difference $\Yr^j_t - \psi^j(T-t,\tilde X_t)$ is a Martingale concluding~\eqref{eq:Yr}. Substituting 
into~\eqref{eq:phi-j} gives the expression for $\phi^j$.
\end{proof}

Our next goal is concerned with the convergence of the iterates $\phi^j$. Let $\psi^\star$ denote the fixed point satisfying
\begin{equation*}
    \frac{\partial \psi^\star}{\partial t} + \mathcal A_t \psi^\star + \mathcal A'_t \phi^\star  =0,\quad \psi^\star(T,x) = \nabla \ell_f(x)
\end{equation*}
where $\phi^\star = \Pi^{\mathcal G}(\psi^\star)$. 
When $\Pi_{\mathcal{G}}$ is the identity, $\psi^\star = \phi^\star$ recovers 
the solution to PDE~\eqref{eq:pde}. 

To state the convergence result, we specialize to the case where $\mathcal{G}$ 
is a finite-dimensional function class. Specifically, let $\mathcal{H}:=L^2([0,T]\times\Re^n; \Re^n)$ be equipped with the inner product
\[
    \langle \phi, \psi \rangle_{\mathcal{H}} 
    := \int_0^T\int_{\mathbb{R}^n} \phi(x)^\top \psi(x)\, \rho_t(x)\, \ud x\,\ud t ,
\]
where $\rho_t$ is the marginal density of $\tilde{X}_t$. Assume $\mathcal{G} = 
\mathrm{span}\{\varphi_1,\ldots,\varphi_K\}$, where $\{\varphi_k\}_{k=1}^K 
\subset \mathcal{H}$ are orthonormal basis functions. Under this assumption, 
the projection $\Pi^{\mathcal{G}} : \mathcal{H} \to \mathcal{G}$ is the 
orthogonal projection
$
    \Pi_{\mathcal{G}}(\phi) = \sum_{k=1}^K \langle \phi, \varphi_k 
    \rangle_{\mathcal{H}}\, \varphi_k,
$
which is a bounded linear operator on $\mathcal{H}$. For any $t\in [0,T]$ define the norm $\|\phi\|_{\mathcal H_t}^2:=\int \|\phi(x)\|^2\rho(t,x)\ud x$. Define the constant
\begin{equation}\label{eq:M}
    M := \sup_{t,s \in [0,T]} \left\| \Pi_t^{\mathcal{G}}\, 
    \mathcal{P}_{t,s}\, \mathcal{A}'_s\,  \Pi_s^{\mathcal{G}} 
    \right\|_{\mathrm{op}},
\end{equation}
where $\|\cdot\|_{\mathrm{op}}$ denotes the operator norm induced by $\| \cdot\|_{\mathcal H_s} \to \|\cdot\|_{\mathcal H_t}$, $\Pi^{\mathcal G}_t(\phi):=\Pi^{\mathcal G}(\phi)(t,\cdot)$ is the evaluation of the projection at time $t$, and 
$\mathcal{P}_{t,s}$ is the semigroup associated with $\mathcal{A}_t$. Since 
$\mathcal{G}$ is finite-dimensional, the composition is represented, 
in the basis $\{\varphi_k\}$, by the $K\times K$ matrix. 
Under smoothness assumptions on the basis functions, each matrix entry is 
finite and $M < \infty$.

\begin{proposition}\label{prop:convergence}
Under the above assumptions, the iterates $\phi^j$ converge to the fixed 
point $\phi^\star$ at a superlinear rate:
\begin{align*}
    \sup_{t\in[0,T]}\|\phi^j(t,\cdot) - \phi^\star(t,\cdot)\|_{\mathcal{H}_t} 
    \leq \frac{(MT)^j}{j!}\sup_{t\in[0,T]}
    \|\phi^0(t,\cdot) - \phi^\star(t,\cdot)\|_{\mathcal{H}_s}.
\end{align*}
In particular, $\phi^j \to \phi^\star$ as $j\to\infty$ for any initialization 
$\phi^0$.
\end{proposition}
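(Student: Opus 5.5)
The plan is to write the iteration as a Volterra-type linear recursion in the error and then iterate a Gronwall-style estimate, which is what produces the factorial. Set $e^j := \phi^j - \phi^\star$ and $\delta^j := \psi^j - \psi^\star$. Subtracting the fixed-point PDE for $\psi^\star$ from the PDE~\eqref{eq:pde-psi} for $\psi^j$ gives
\[
\frac{\partial \delta^j}{\partial t} + \mathcal A_t \delta^j + \mathcal A'_t e^{j-1} = 0, \qquad \delta^j(T,\cdot)=0 .
\]
The terminal data cancel because both equal $\nabla \ell_f$, and the equation is linear, since $\mathcal A_t$ and $\mathcal A'_t$ are linear operators when $g$ depends only on $t$. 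Duhamel's formula with the semigroup (propagator) $\mathcal P_{t,s}$ generated by $\mathcal A_t$ then gives
\[
\delta^j(t,\cdot) = \int_t^T \mathcal P_{t,s}\,\mathcal A'_s\, e^{j-1}(s,\cdot)\,\ud s .
\]

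Next I would apply the projection. The orthogonal projection $\Pi^{\mathcal G}$ onto a finite-dimensional space acts linearly and time-slice-wise, i.e.\ $\Pi^{\mathcal G}(\phi)(t,\cdot)=\Pi^{\mathcal G}_t\phi(t,\cdot)$, which is the interpretation implicit in the definition of $M$. This gives $e^j = \Pi^{\mathcal G}(\psi^j)-\Pi^{\mathcal G}(\psi^\star) = \Pi^{\mathcal G}\delta^j$. Since $e^{j-1}\in\mathcal G$ for $j\ge 2$, we have $e^{j-1}(s,\cdot)=\Pi^{\mathcal G}_s e^{j-1}(s,\cdot)$. For $j=1$, I either assume $\phi^0\in\mathcal G$ or replace $\phi^0$ by its projection. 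Hence
\[
e^j(t,\cdot)=\int_t^T \Pi^{\mathcal G}_t\mathcal P_{t,s}\mathcal A'_s\Pi^{\mathcal G}_s\, e^{j-1}(s,\cdot)\,\ud s .
\]
Bounding the integrand with the operator norm in~\eqref{eq:M}, from $\mathcal H_s$ to $\mathcal H_t$, gives
\[
\|e^j(t)\|_{\mathcal H_t}\le M\int_t^T \|e^{j-1}(s)\|_{\mathcal H_s}\,\ud s .
\]

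Induction then finishes the argument. Let $E_0:=\sup_s\|e^0(s)\|_{\mathcal H_s}$; the claim is $\|e^j(t)\|_{\mathcal H_t}\le E_0 M^j (T-t)^j/j!$. The base case is trivial, and the inductive step is $\int_t^T (T-s)^{j-1}/(j-1)!\,\ud s=(T-t)^j/j!$. Taking the supremum over $t$ gives the bound $(MT)^j/j!\,E_0$, and convergence follows because $(MT)^j/j!\to0$ for any finite $M$.

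The main obstacle is justifying the Duhamel representation and the interchange of the projection with the time integral. One needs $\mathcal P_{t,s}$ to be a well-defined backward evolution for the nonautonomous operator $\mathcal A_t$, which includes the negative-diffusion term $-\tfrac12\trace(G\partial^2_x)$. Read backward from $T$, this is a forward-in-$t$ heat-type operator for the reversed process, so the propagator should exist; this is essentially the regularity already assumed for the PDE~\eqref{eq:pde}. One also needs that the Bochner integral of $\mathcal H$-valued functions commutes with the bounded linear map $\Pi^{\mathcal G}_t$. I would treat these as following from the standing smoothness assumptions and from $M<\infty$, which the statement already provides, rather than prove them from scratch.
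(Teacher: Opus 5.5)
Your argument is the paper's: subtract the fixed-point equation, write the error by Duhamel's formula with $\mathcal P_{t,s}$, and apply $\Pi^{\mathcal G}$ to get a Volterra recursion driven by $\Pi^{\mathcal G}_t\mathcal P_{t,s}\mathcal A'_s\Pi^{\mathcal G}_s$. You then bound that recursion by $M$ and read off $(MT)^j/j!$ from the volume of the simplex. The paper also relies silently on the two structural points you make explicit.
\begin{itemize}
\item Slice-wise action of $\Pi^{\mathcal G}$ is an assumption on $\mathcal G$, not a property of finite-dimensional orthogonal projections. For the literal $\mathcal G=\mathrm{span}\{\varphi_k\}$, with coefficients $\langle\phi,\varphi_k\rangle_{\mathcal H}$ integrated over $[0,T]$, it is not automatic.
\item $\phi^0\in\mathcal G$ must simply be assumed. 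Replacing $\phi^0$ by $\Pi^{\mathcal G}\phi^0$ changes $\psi^1$, so it proves the bound for a different sequence.
\end{itemize}

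You depart from the paper in the direction of the Duhamel integral, and your justification there is backwards. Because $\mathcal A_t$ contains $-\tfrac12\trace(G\,\partial_x^2)$, the equation $\partial_t\delta+\mathcal A_t\delta+F=0$ reads $\partial_t\delta=\tfrac12\trace(G\,\partial_x^2\delta)+\dots$, which is a forward heat equation in $t$. Up to the zeroth-order term, it is the Kolmogorov backward equation of the reversed process $\Xr$, whose terminal time corresponds to $t=0$. Being forward heat-type in $t$ is precisely what rules out propagating it backward from $T$.

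Its evolution family $\mathcal P_{t,s}$ is well defined for $s\le t$, and that is what the paper uses: $e^j(t,\cdot)=\int_0^t\mathcal P_{t,s}\mathcal A'_s\Pi^{\mathcal G}(e^{j-1})(s,\cdot)\,\ud s$. Your formula $\int_t^T\mathcal P_{t,s}\mathcal A'_s e^{j-1}(s,\cdot)\,\ud s$ is the one that literally matches $\delta^j(T,\cdot)=0$, but it needs $\mathcal P_{t,s}$ for $s>t$, i.e.\ a backward heat flow. That is not a bounded operator in general. It also does not follow from the regularity assumed for~\eqref{eq:pde}, whose operator $\mathcal L$ carries the opposite sign of diffusion.

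You can still close the estimate formally by taking the $\sup_{t,s\in[0,T]}$ in~\eqref{eq:M} as a hypothesis that covers $s>t$. Then $\int_t^T (T-s)^{j-1}/(j-1)!\,\ud s=(T-t)^j/j!$ gives the same bound as the paper's $\int_0^t s^{j-1}/(j-1)!\,\ud s=t^j/j!$. This must be stated as an assumption, however, not as something that ``should exist.'' The tension you ran into is already in the paper: it states the error equation with $e^j(T,\cdot)=0$ but applies the forward Duhamel formula from $t=0$.
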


% The following result establishes 
% convergence of the iterates. To state the result, we consider a 
% \begin{proposition}
%     Assume $\Pi^{\mathcal F}$ is a linear operator and $\exists M>0$ such that 
% \begin{align}\label{eq:norm-bound}
%     \| \Pi^{\mathcal F}_t \mathcal P_{t,s} \mathcal A'_s  \Pi^{\mathcal F}_s \| \leq  M \quad \forall t,s \in [0,T],
% \end{align}
% where $\mathcal P_{t,s}$ is the semi-group associated with $\mathcal A_t$, and $\Pi^{\mathcal F}_t(\phi):=\Pi^{\mathcal F}(\phi)(t,\cdot)$ is the evaluation of the projection at time $t$. Then,   
% \begin{align*}
%    \sup_{t\in [0,T]} \| \phi^J(t,\cdot) - \phi^\star(t,\cdot)\| \leq \frac{(MT)^J}{J!} \sup_{s\in [0,T]} \|\phi^0(t,\cdot) - \phi^\star(t,\cdot)\|.
% \end{align*}
% \end{proposition}
\begin{proof}The proof follows a Picard iteration argument. By linearity of $\Pi_{\mathcal{G}}$, the error $e^j = \psi^j - \psi^\star$ 
solves
\begin{equation}\label{eq:error-pde}
    \frac{\partial e^j}{\partial t} + \mathcal A_t e^j + \mathcal A_t^\prime \Pi^{\mathcal G}(e^{j-1}) = 0,~~ e^j(T,x) = 0,
\end{equation}
which yields
\begin{align*}
    e^j(t,\cdot) = \int_0^t \mathcal P_{t,s} \mathcal A'_s \Pi^{\mathcal G} (e^{j-1})(s,\cdot) \ud s. 
\end{align*}
Setting $\tilde e^j:= \Pi^{\mathcal G}(e^j)=\phi^j-\phi^\star$ and $\mathcal L_{t,s}:= \Pi^{\mathcal G}_t \mathcal P_{t,s}\mathcal A^\prime_s  \Pi^{\mathcal G}_s$,
\begin{align*}
    \tilde e^j(t,\cdot) = \int_0^t \mathcal L_{t,s} \tilde e^{j-1}(s,\cdot) \ud s. 
\end{align*}
Iterating it from $j=J$ to $j=1$ concludes 
\begin{align*}
    \tilde e^J(s_J,\cdot) = \int_0^{s_J} \ldots \int_0^{s_1}\mathcal L_{s_J,s_{J-1}}  \ldots \mathcal L_{s_2,s_{1}} \tilde e^0(s_1,\cdot)\ud s_1\ldots\ud s_J. 
\end{align*}
Taking the norm of both sides and applying~\eqref{eq:M} yields
\begin{align*}
    \| \tilde e^J(t,\cdot)\| \leq \frac{(MT)^J}{J!} \sup_s \|\tilde e^{0}(s,\cdot)\|,\quad \forall t \in[0,T],
\end{align*}
concluding the proof. 
\end{proof}
We note that there remains a gap between this theoretical result and the 
practical algorithm, where $\mathcal{G}$ is parameterized by a neural network 
and the minimization~\eqref{eq:phi-j} is solved approximately; closing this 
gap is left for future work.

%%%%%%%%%%%%%%%%%%%%%%%%%%%%%%%%%%%%%%%%%%%%%%%%%%%%%%%%%%%%%%%%%%%%%%%%%%%%%%%%

\section{Numerical Experiments}\label{sec:num}
In this section, we present numerical results for four experiments: a linear model, a stochastic pendulum, the full fine-tuning procedure on a toy diffusion model, and a double-well benchmark. The first two experiments are designed to assess the ability of the proposed framework to compute the adapted adjoint process and to compare its accuracy, for gradient estimation, against the non-adapted adjoint process. 
The third and fourth experiments compare the performance of the proposed TR-BSDE method against adjoint matching, which relies on the non-adapted adjoint process.
% The third and fourth experiments demonstrate the use of our BSDE solver for fine-tuning a diffusion model, and compares its performance with adjoint matching, which relies on the non-adapted adjoint process. 
The code for reproducing the results and the details of the numerical procedures are available online~\footnote{\url{https://github.com/YuhangMeiUW/BSDE-Gradient}}.

% Both original system dynamics \eqref{eq:fsde-modified} and time-reversal system dynamics are simulated by Euler-Maryama method. Notice that adapted, non-adapted adjoint process, and time-reversed adapted adjoint process are all linear in adjoint state. In order to improve the simulation accuracy, we use the exponential integrator for these three adjoint processes.

\begin{algorithm}[htbp]
\caption{Fine-tuning the diffusion model} 
\begin{algorithmic}[1]
\STATE \textbf{Input:}  Iteration number $k_f$, optimization iteration number $o_f$, initial distribution $q_0$ with parameters $\mu$ and $Q$, initial control law $k(t,x) =0$.
\FOR{$\mathrm k \in \{1,2,\dots,k_f\}$}
\STATE Obtain the function  $\phi$ from Algorithm \ref{alg:phi} 
\STATE Update the feedback control law accoding to $k(t,x) = - g(t)^\top\phi(t,x)$
\IF{$\mathrm k \bmod 5 = 0$}
% \STATE Compute gradient $\frac{\partial}{\partial \mu}J$ and $\frac{\partial}{\partial \mu}J$~\eqref{eq:ini-grad} with learned function $\phi(t,\cdot)$, and update initial distribution parameter $\theta$ through ADAM optimizer for $o_f$ iteration
\STATE Update $\mu$ and $Q$ according to~\eqref{eq:ini-grad} using the ADAM optimizer for $o_f$ iterations
\ENDIF
\ENDFOR
\STATE \textbf{Output:} Feedback control law $k(t,x)$, optimal initial distribution $q_0$
\end{algorithmic}
\label{alg:fine-tune}
\end{algorithm}

\begin{algorithm}[htbp]
\caption{Projected non-adapted adjoint process} 
\begin{algorithmic}[1]
\STATE \textbf{Input:}  sample size $N$, step-size $\Delta t$, iteration $J_f$, function class $\Phi$, initial distribution $q_0$.   
\STATE  Initialize $\{X_0^i\}_{i=1}^N \sim q_0$
\STATE Obtain $\{X_t^i\}_{i=1}^N$ by simulating~\eqref{eq:fsde-modified}
\STATE Initialize function $\phi \in \Phi$
% \FOR{$j \in \{1,2,\dots,J_f\}$}
\STATE $\Yn_T^i = \nabla \ell_f (X_T^i)$
\STATE Obtain $\{\Yn_t^i\}_{i=1}^N$ by simulating~\eqref{eq:non-ad}
 \STATE Update $\phi$  using ADAM optimizer minimizing~\eqref{eq:pnaa-opt}
 % \STATE$\phi^{j+1}(t,\cdot) = \argmin_{\phi^{j+1} \in \Phi} \sum_{i=1}^N\|\Yn_t^i - \phi^{j+1}(t, X_t^i)\|^2$
% \ENDFOR
 \STATE \textbf{Output:}  $\{\phi(t,\cdot)\}_{t\in \{0,\Delta t,\ldots,T\} }$
\end{algorithmic}
\label{alg:pnaa}
\end{algorithm}

\begin{algorithm}[htbp]
\caption{Adjoint matching fine-tuning} 
\begin{algorithmic}[1]
\STATE \textbf{Input:} Iteration number $k_f$, control input iteration number $J_f$, optimization iteration number $o_f$, function class $\Phi$, initial distribution $q_0$ with parameters $\mu$ and $Q$, initial control law $k(t,x) =0$.
\FOR{$k \in \{1,2,\dots,k_f\}$}
\FOR{$j \in \{1,2,\dots,J_f\}$}
\STATE Obtain $\{X_t^i\}_{i=1}^N$ by simulating \eqref{eq:fsde-modified} 
\STATE $\Yn_T^i = \nabla \ell_f(X_T^i)$
\STATE Obtain $\{\Yn_t^i\}_{i=1}^N$ by simulating~\eqref{eq:non-ad}
\STATE Update control law $k(t,x)$ using ADAM optimizer minimizing
 $\frac{1}{N}\sum_{i=1}^N \|k(t,X_t^i) + g(t)^\top \Yn_t^i\|$
\ENDFOR
\IF{$k \bmod 5 = 0$}
\STATE Update $\mu$ and $Q$ according to~\eqref{eq:ini-grad} with $\{\Yn_0^i\}_{i=1}^N$ using the ADAM optimizer for $o_f$ iteration
\ENDIF
\ENDFOR
\STATE \textbf{Output:} Feedback control law $k(t,x)$, optimal initial distribution $q_0$
\end{algorithmic}
\label{alg:ad-mat}
\end{algorithm}

\subsection{Verification in linear quadratic setting}\label{sec:num-lin}
We consider the $2$-dimensional linear system 
\begin{equation}
    \ud X_t = AX_t\ud t + \epsilon B\ud W_t, ~~X_0 =\xi \sim q_0,
\end{equation}
and the objective function with the quadratic terminal cost 
\begin{equation}
    \Expect[J(\xi)]:=\Expect \left[\frac{1}{2}X_T^\top Q_fX_T\right],
\end{equation}
where $\epsilon>0$  denotes the strength of the noise, 
\[A = \begin{bmatrix}0&1\\-1&-0.5\end{bmatrix}, \quad B = \begin{bmatrix}1&0\\0&1\end{bmatrix}, \quad Q_f = \begin{bmatrix}1&0\\0&1\end{bmatrix},\] 
$q_0$ is standard Gaussian, and $T=2$. We simulate and compare two numerical methods to estimate the derivative $\frac{\partial \Expect[J(\xi)]}{\partial \xi}$. The ground-truth is  $\phi(0,\xi)$ where $\phi$ solves the PDE~\eqref{eq:pde}. In the linear case, the exact solution is $\phi(t,x)=G(t)x$ where $G(t)$ solves the differential Riccati equation~\cite[Eq. 15]{mei2025time}. 

\medskip 
\noindent
{\bf Time-reversed BSDE (TR-BSDE):} To estimate $\phi(0,x)$, we use the iterative procedure that alternates between simulating the time-reversed BSDE~\eqref{eq:tr-bsde} and solving the optimization ~\eqref{eq:phi-opt}. Although the ground-truth function $\phi$ is linear in this example, we parameterize it with a neural network in order to validate the proposed training procedure. We perform $10$ outer iterations. In each iteration, we simulate~\eqref{eq:tr-bsde}, then train the neural network for $\phi$ over $2000$ optimization steps, and then use the updated $\phi$ in the next round of BSDE simulation, as described in Algorithm~\ref{alg:phi}.

\medskip 
\noindent
{\bf Projected non-adapted adjoint process (PNAA):}   To estimate $\phi(0,x)$, we use the relation $\phi(0,\xi)=Y_0=\Expect[\Yn_0 \mid \mathcal F_0]$, where $\Yn$ denotes the non-adapted adjoint process. Specifically, we simulate $N$ sample trajectories $\{(X^i,\Yn^i)\}_{i=1}^N$ with random initial conditions $X^i_0=\xi^i\sim q_0$, and then estimate $\phi$ by solving the regression problem 
\begin{equation}\label{eq:pnaa-opt}
    \min_{\phi \in \Phi}\sum_{i=1}^N \| \Yn^i_t - \phi(t,X^i_t)\|^2.
\end{equation}
The procedure is detailed in Algorithm~\ref{alg:pnaa}.
To ensure a fair comparison with TR-BSDE, we use the same neural network architecture and the same total number of training iterations.
% , and $q_0^\xi$ is an empirical distribution 
% \begin{equation*}
%     q_0^\xi = \frac{1}{N}\sum_i^N \delta_{\xi^i}, ~~ \{\xi^i\}_{i=1}^N\sim \mathcal N(0,I)
% \end{equation*}

% We want to optimize the following cost
% \begin{equation*}
%     \bar J(\xi^1,\xi^2,\dots,\xi^N) = \sum_{i=1}^N J(\xi^i),
% \end{equation*}
% and the gradient of the cost with respect to the parameter $\xi^i$ can be expressed as
% \begin{equation*}
%     \frac{\partial}{\partial \xi^i} \bar J =  \frac{\partial}{\partial x}J(\xi^i)
% \end{equation*}
% The ground truth for $\frac{\partial}{\partial x}J(\xi^i)$ is $G_0\xi^i$, where $G_0$ is the value of $G_t$ at time $t=0$ and $G_t$ solves the following differential equation.
% \begin{equation}
%     \dot G_t + G_t^\top A + A^\top G_t = 0, ~~ G_T = Q_f
% \end{equation}
% Let time step $\Delta t=0.05$ and iteration number $J_f=10$, we use two neural networks to parameterize the score function and $\phi$ function and apply our algorithm.
% We use ADAM optimizer with step size $10^{-3}$ and optimize the initial points for 5000 iterations with phi and compare with the automatic differentiation method. The result is shown is Fig \ref{fig:lin-heat}.
% \begin{figure}[thpb]
%     \centering
%     \includegraphics[width=0.6\linewidth]{figs/linear_example_loss_landscape.pdf}
%     \caption{Optimization result with noise strength $\epsilon=1$ and sample number $N=2000$. Comparison between the Time-reversal BSDE and Auto Differentiation. The heat map represents the expected terminal cost corresponding to different initial states.}
%     \label{fig:lin-heat}
% \end{figure}
To quantify the accuracy of gradient estimation, we use the following mean-squared error (MSE) criterion:
\begin{equation}
    \text{MSE} = \frac{1}{N} \sum_{i=1}^N \left\| G(0)\xi^i - \phi(0,\xi^i)\right\|^2_2,\quad \xi^i \sim q_0,
\end{equation}
where $G_0\xi^i$ is the ground truth value and $\phi(0,\xi)$ is its estimate obtained from the two methods above. 
Figure~\ref{fig:lin-noise} illustrates the effect of the noise strength $\epsilon$ on the performance of the two estimators. As expected, the MSE increases with $\epsilon$ for both methods. However, the proposed TR-BSDE method consistently achieves lower error than PNAA across all noise levels, reflecting the fact that the non-adapted adjoint process yields inherently noisier gradient estimates. 
% adjoint method. For fair comparison, we simulate the non-adapted adjoint process and use a neural network with the same structure as our method to learn a $\phi$ function by regression, same as \eqref{eq:phi-opt}, between the non-adapted adjoint process and the forward process, and this is named the projected non-adapted adjoint process (PNAA) method.
% The results for varying noise strength are shown in Fig.~\ref{fig:lin-noise}, where the sample size is fixed at $N=2000$.
\begin{figure*}
    \centering
    \begin{subfigure}{0.32\textwidth}\centering
        \includegraphics[width=\hsize,trim={0 10 0 0},clip]{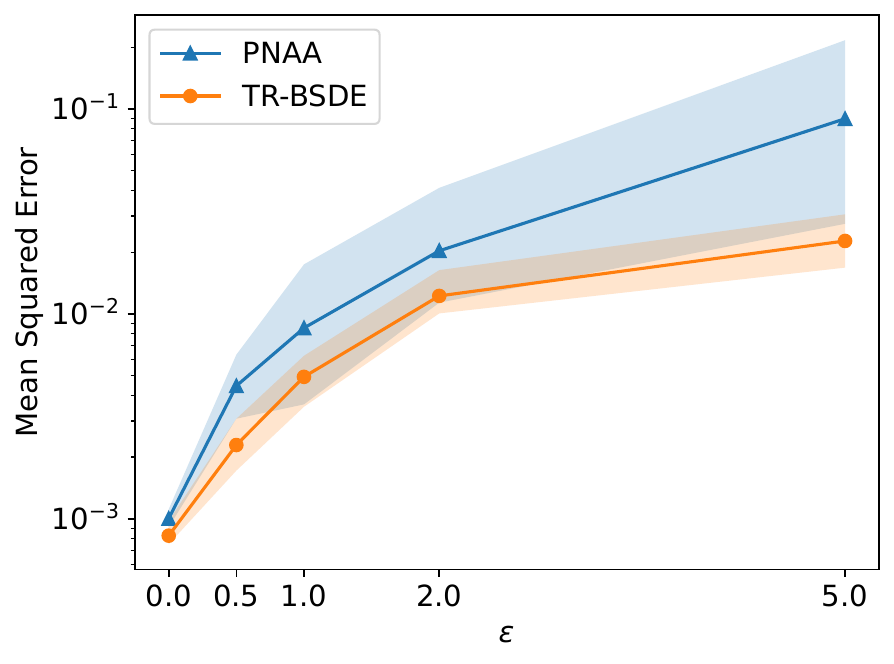}
        \caption{}
        \label{fig:lin-noise}
    \end{subfigure}
    \begin{subfigure}{0.315\textwidth}\centering
        \includegraphics[width=\hsize,trim={0 0 0 0},clip]{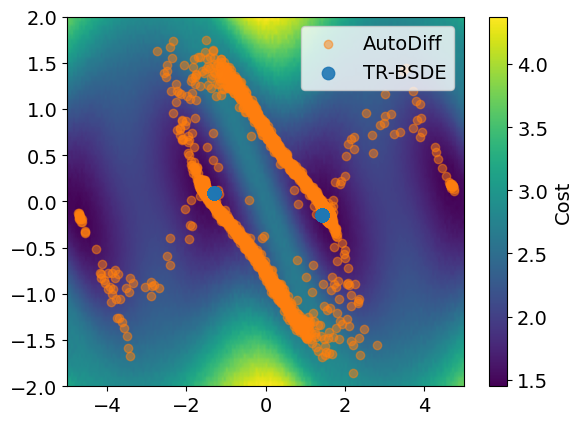}
        \caption{}
        \label{fig:IP-heat-ad}
    \end{subfigure}
    \begin{subfigure}{0.315\textwidth}\centering
        \includegraphics[width=\hsize,trim={0 0 0 0},clip]{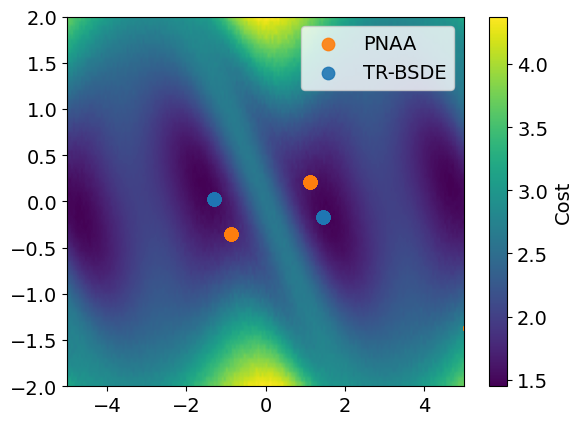}
        \caption{}
        \label{fig:IP-heat-pnaa}
    \end{subfigure}
    \caption{Numerical results for comparing the accuracy of gradient estimation methods. (a) influence of noise strength $\epsilon$ on MSE for the linear system in Sec.~\ref{sec:num-lin}. The solid line represents the mean, and the shaded region represents the range from the minimum to the maximum across $10$ experiments. (b-c) Optimal initial distribution for the nonlinear system in Sec.~\ref{sec:num-ip}. The heat map represents the optimization objective function and the points represent the support of the empirical distribution. 
    %Panel (b) compares  result of auto differentiation and TR-BSDE (c) Optimization results of PNAA and TR-BSDE. Since the PNAA result includes several points that drift far from the origin, we present a zoomed-in view of the region around the origin for clearer visualization. The heat map represents the expected terminal cost for different initial states.
    }
\end{figure*}
\begin{figure*}[thpb]
	\centering
    \begin{subfigure}{0.24\textwidth}\centering
        \includegraphics[width=\hsize,trim={0pt 0pt 0pt 0pt},clip]{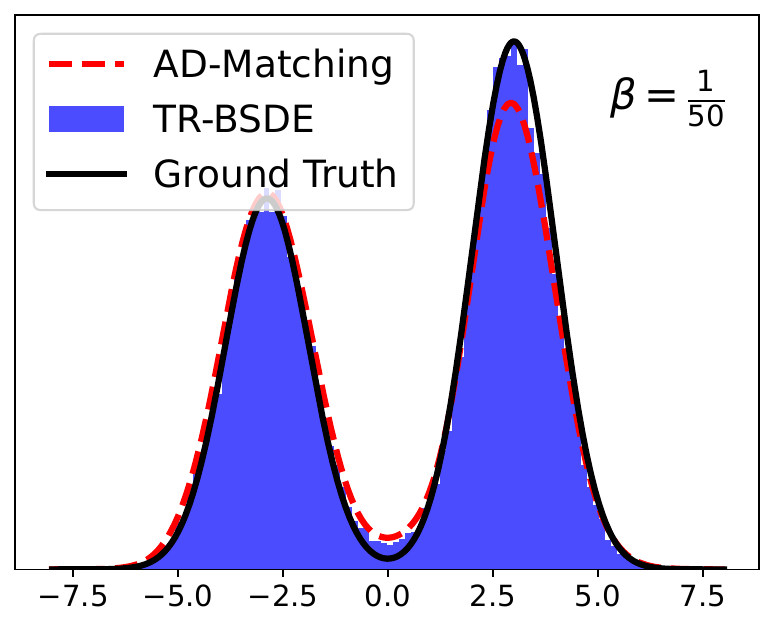}
        % \caption{}
    \end{subfigure}
    \begin{subfigure}{0.24\textwidth}
        \includegraphics[width=\hsize,trim={0pt 0pt 0pt 0 pt},clip]{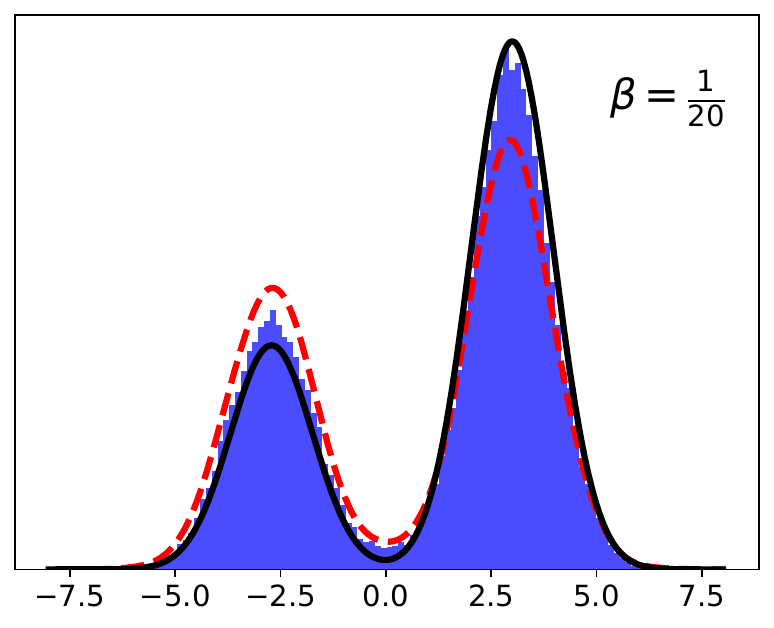}
        % \caption{}
    \end{subfigure}
    \begin{subfigure}{0.24\textwidth}
        \includegraphics[width=\hsize,trim={0pt 0pt 0pt 0 pt},clip]{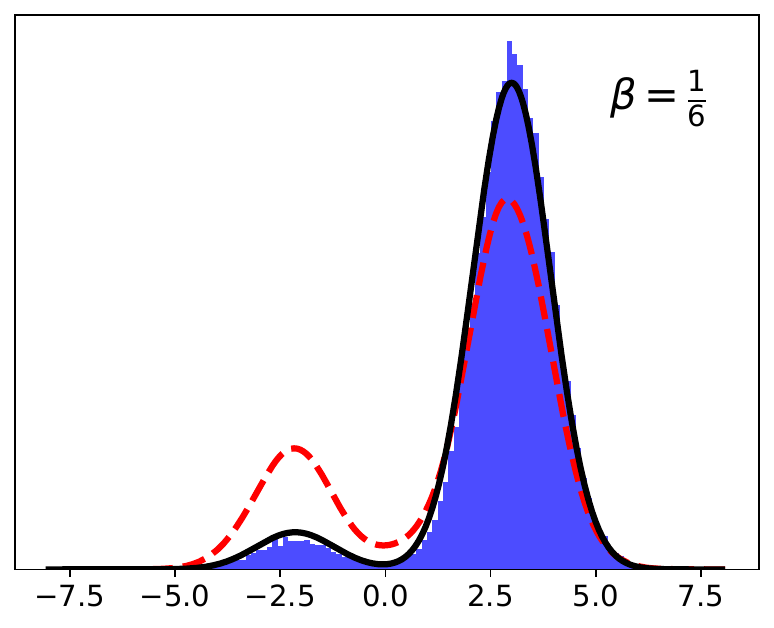}
        % \caption{}
    \end{subfigure}
    \begin{subfigure}{0.24\textwidth}
        \includegraphics[width=\hsize,trim={0pt 0pt 0pt 0 pt},clip]{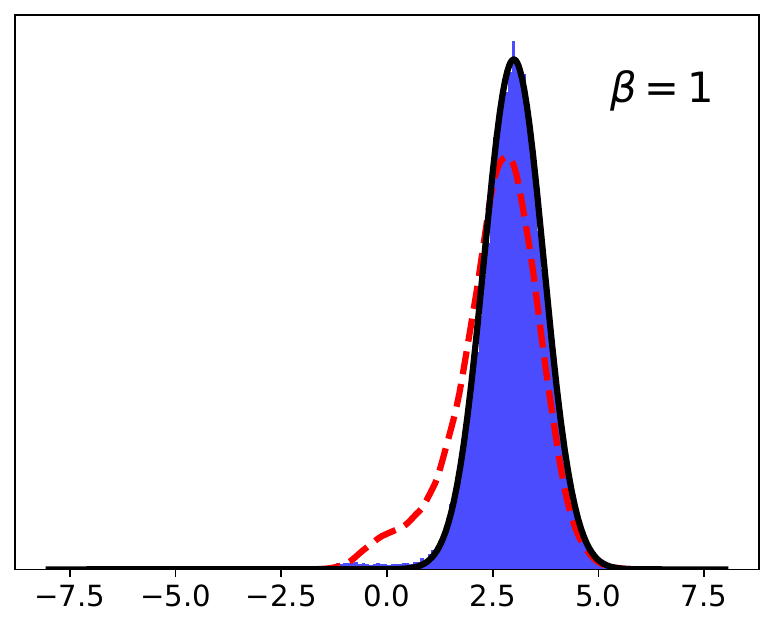}
        % \caption{}
    \end{subfigure}
    % \begin{subfigure}{0.24\textwidth}\centering
    %     \includegraphics[width=\hsize,trim={0pt 0pt 0pt 0pt},clip]{figs/finetune_admatching_temp50.0.pdf}
    %     \caption{}
    % \end{subfigure}
    % \begin{subfigure}{0.24\textwidth}
    %     \includegraphics[width=\hsize,trim={0pt 0pt 0pt 0 pt},clip]{figs/finetune_admatching_temp20.0.pdf}
    %     \caption{}
    % \end{subfigure}
    % \begin{subfigure}{0.24\textwidth}
    %     \includegraphics[width=\hsize,trim={0pt 0pt 0pt 0 pt},clip]{figs/finetune_admatching_temp6.0.pdf}
    %     \caption{}
    % \end{subfigure}
    % \begin{subfigure}{0.24\textwidth}
    %     \includegraphics[width=\hsize,trim={0pt 0pt 0pt 0 pt},clip]{figs/finetune_admatching_temp1.0.pdf}
    %     \caption{}
    % \end{subfigure}
	\caption{Numerical result for comparing TR-BSDE and adjoint matching on fine-tuning the diffusion model (Sec.~\ref{sec:num-diff}): The black solid line denotes the target tilted distribution, the red dashed line denotes the fine-tuned result from the adjoint matching method, and the blue histogram denotes the fine-tuned result from TR-BSDE method.}
    \label{fig:sample-compare}
    %Time horizon $T=1$, sample size $N=1000$, and noise scalar $\epsilon=0.3$}
\end{figure*}
% \begin{figure}[tbh]
%     \centering

%     % \begin{subfigure}{0.3\textwidth}
%         % \centering
%         \includegraphics[width=0.7\linewidth]{figs/stable_linear_example_trainphi_mse_nl_expnum10both.pdf}
%         \caption{Numerical results for comparing the two methods: Time-reversal BSDE and non-adapted adjoint process on the 2-dimensional linear system gradient estimation presented in Sec \ref{sec:num-lin}. The figure demonstrates the influence of noise strength $\epsilon$ on MSE with $N=2000$. The solid line represents the mean value, and the shaded region represents the range from the minimum to the maximum across 10 experiments.}
%         \label{fig:lin-noise}
%     % \end{subfigure}

%     % \vspace{0.5cm}

%     % \begin{subfigure}{0.3\textwidth}
%     %     \centering
%     %     \includegraphics[width=\linewidth]{figs/stable_linear_example_mse_trainphi_samplenum_expnum10.pdf}
%     %     \caption{}
%     %     \label{fig:lin-sample}
%     % \end{subfigure}

%     % \caption{Numerical results for comparing the two methods: Time-reversal BSDE and non-adapted adjoint process on the 2-dimensional linear system gradient estimation presented in Sec \ref{sec:num-lin}. (a) Influence of noise strength $\epsilon$ on MSE with $N=2000$. (b) Influence of sample size $N$ on MSE with $\epsilon=1$. The solid line represents the mean value and the shaded region represents the range from the minimum to the maximum across 20 experiments.}
% \end{figure}

\subsection{Optimal initial distribution for inverted pendulum}\label{sec:num-ip}
We consider the nonlinear stochastic pendulum
\begin{equation*}
    f(x) = \begin{bmatrix}x(2)\\\sin(x(1))-0.01x(2)\end{bmatrix}, ~~g(x) = \begin{bmatrix}
        0\\0.5
    \end{bmatrix},
\end{equation*}
and the objective 
\begin{equation}
    \Expect[J(\xi)] := \Expect \left[\frac{1}{2}X_T(2)^2 + 1-\cos(X_T(1)) \right],\quad \xi \sim q_0.  
\end{equation}
We study the problem of optimizing $\Expect[J(\xi)]$ with respect to the initial distribution $q_0$, assumed to have the empirical form
\begin{equation*}
    q_0 = \frac{1}{N}\sum_i^N \delta_{\theta^i}, 
    %~~ \{\xi^i\}_{i=1}^N\sim \mathcal N(0,I)
\end{equation*}
where the support points $\theta^i$ are treated as optimization variables. The gradient of the objective with respect to each variable  $\theta^i$ is given by $\phi(0,\theta^i)$, where $\phi$ solves the PDE~\eqref{eq:pde}. 

We approximate $\phi$ using both TR-BSDE (Algorithm~\ref{alg:phi}) and PNAA (Algorithm~\ref{alg:pnaa}) , and use the resulting gradients to optimize  for $\theta^i$ through a gradient descent procedure. For comparison, we also implement a standard automatic differentiation (AutoDiff) baseline, which computes gradients by backpropagating through a time-discretized version of the SDE. We run the optimization algorithm for $15000$ iterations. 
The resulting accumulation points of $\theta^i$ are shown in Fig \ref{fig:IP-heat-ad} and Fig.~\ref{fig:IP-heat-pnaa}, together with a heat map of the objective function $\Expect[J(\xi)\mid \xi=\theta]$, estimated by Monte Carlo simulation.
It is observed that the points produced by TR-BSDE concentrate more tightly in low-cost regions than those obtained by PNAA. Moreover, the PNAA result includes several points that drift far from the origin and are therefore omitted from the figure.  Compared with the AutoDiff baseline, TR-BSDE also converges substantially faster.  Overall, these results indicate that the proposed method yields more accurate and lower-variance gradient estimates for this nonlinear system. 
% It is observed that our proposed TR-BSDE algorithm provides a more accurate and less variance gradient estimation for the nonlinear system. The automatic differentiation method converges much more slowly than our method, and the $\phi$ learned from non-adapted adjoint process has some bias in estimation.

\subsection{Fine-tuning diffusion model}\label{sec:num-diff}
We consider a pretrained one-dimensional diffusion model
that generates samples from the bimodal distribution 
% , which generates samples in a Bimodal distribution, governed by the following SDE
% \begin{equation}
%     \ud X_t = (aX_t  + \sigma^2 s(t,X_t)) \ud t + \sigma \ud W_t, ~~X_0\sim p_0,~X_T \sim p_T,
% \end{equation}
% where $a=2$, $\sigma=2$, $T=1$, $p_0 = \mathcal N(0,1)$, time step $\Delta t =0.02$, and 
\[p_T = 0.5\mathcal{N}(-3,1) + 0.5\mathcal N(3,1)\]
% \begin{equation*}
%     p_T = 0.5\mathcal{N}(-3,1) + 0.5\mathcal N(3,1)
% \end{equation*}
Our goal is to fine-tune this model so that it generates samples from the tilted distribution defined through the terminal cost
\begin{equation*}
    \ell_f(x) = \beta \frac{1}{2}(x-3)^2
\end{equation*}
where $\beta>0$ is the tilting parameter. We assume the nominal initial distribution $p_0 = \mathcal N(0,1)$, and parameterize the modified initial distribution  $q_0=\mathcal N(\mu,Q^2)$, where $\mu$ and $Q$ are optimization variables. The objective is to solve the SOC problem~\eqref{eq:SOC} by jointly optimizing the initial distribution  $q_0$ and the control law $U_t=k(t,X_t)$. 

\medskip
\noindent
{\bf Initial distribution optimization:} To optimize the initial distribution, we use the closed-form expression for the KL divergence between one-dimensional Gaussian distributions 
\begin{equation*}
    D_\mathrm{KL}(q_0\|p_0) = \frac{1}{2}(Q^2 + \mu^2 - \log Q^2 -1),
\end{equation*}
then we can obtain the the gradient of cost with respect to the initial distribution mean and variance
\begin{subequations}
\label{eq:ini-grad}
\begin{align}
    \frac{\partial}{\partial \mu} [\Expect_{\xi \sim q_0}[ J(\xi,U)]  +  D_\mathrm{KL}(q_0\|p_0)] = & \Expect[Y_0] + \mu,\\ 
    \frac{\partial}{\partial Q}[\Expect_{\xi \sim q_0}[ J(\xi,U)]  +  D_\mathrm{KL}(q_0\|p_0)]  = &\Expect[Y_0(X_0-\mu)/Q] \nonumber \\&+  Q - 1/Q.
\end{align}
\end{subequations}

\medskip
\noindent
{\bf TR-BSDE:} To compute the optimal control law, we initialize with the zero control $k(t,x)=0$, solve the BSDE, and learn $\phi$ from Algorithm~\ref{alg:phi}. After each update of $\phi$, we set
 $k(t,x) = - g(t)^\top \phi(t,x)$. This choice is motivated by the stochastic maximum principle, which implies that at optimality  $U_t=-g(t)^\top Y_t$ and $Y_t=\phi(t,X_t)$. The parameters of the initial Gaussian distribution are updated using~\eqref{eq:ini-grad} and $Y_0 = \phi(0,x)$. The procedure appears in Algorithm~\ref{alg:fine-tune}. 
% \begin{equation*}
%     \partial_\mu J = \beta \Expect[Y_0] + \mu,~~~\partial_Q J =\beta \Expect[Y_0(X_0-\mu)/Q] +  Q - 1/Q
% \end{equation*}

% We can use the KL divergence formula between Gaussian distributions to compute the derivative of the KL-divergence term with respect to $\mu$ and $Q$:  
% \begin{equation*}
%     J(q_0,U_t) = \Expect\left[\beta\ell_f(X_T) + \int_0^T \frac{1}{2} \|U_t\|^2 \ud t \right] + D_\mathrm{KL}(q_0\|p_0)
% \end{equation*}
% We defined terminal cost as $\ell_f(x) = \frac{1}{2}(x-3)^2$, and we assume the optimal initial distribution is Gaussian distribution with mean $\mu$ and standard derivation $Q$, i.e., $q_0 = \mathcal N(\mu,Q^2)$. Under this setting, the KL divergence between $q_0$ and standard Gaussian distribution has an explicit expression
% \begin{equation*}
%     D_\mathrm{KL}(q_0\|p_0) = \frac{1}{2}(Q^2 + \mu^2 - \log Q^2 -1),
% \end{equation*}
% then we can obtain the the gradient of cost with respect to the initial distribution mean and variance
% In our method, after learning $\phi(t,\cdot)$ between $\Xr_t$ and $\Yr_t$, The control law is updated through $U_t = -\beta \sigma \phi(t,\cdot)$. 

\medskip
\noindent 
{\bf Adjoint matching~\cite{domingoadjoint}:} The control law $k(t,x)$ is parameterized by a neural network and learned by regressing $k(t,X_t)$ onto $-g(t)^\top \Yn_t$, where $\Yn_t$ is the non-adapted adjoint process. After a fixed number of regression iterations, the adjoint process is re-simulated under the updated control law. The procedure appears in Algorithm \ref{alg:ad-mat}. As in TR-BSDE, the parameters of the initial Gaussian distribution are updated using~\eqref{eq:ini-grad}, with the difference that $Y_0$ is replaced by the non-adapted process $\Yn_0$.  

% \begin{equation*}
%     U_t = -\beta \sigma \phi(t,\cdot)
% \end{equation*}
We compare two methods for tilting parameter $\beta \in \{\frac{1}{50}, \frac{1}{20}, \frac{1}{8}, \frac{1}{6}, 1\}$.
% with all other same setting. 
 % The experimental settings are as follows. The fine-tuning iteration number is set to $k_f=30$. The number of iterations for the time-reversal BSDE solver is $J_f=5$ when $\beta \in \{1/50,1/20,1/8\}$ and $J_f=10$ otherwise. The optimization iteration number is $1000$, and the sample size is $N=10000$. The score function and $\phi$ are parameterized by two neural networks. For $\beta=1/50$, we initialize the control input as zero; for $\beta > 1/50$, we use the training result from the smaller $\beta$ as a warm start.
The experimental results are shown in Fig.~\ref{fig:sample-compare}. It is observed that when $\beta$ is small, the two methods exhibit similar performance. However, as $\beta$ increases, the fine-tuning task becomes more challenging, and the proposed TR-BSDE outperforms the Adjoint matching approach.

\subsection{Double-well benchmark}

We evaluate TR-BSDE and adjoint matching on the double-well benchmark 
from~\cite{nusken2021solving} as the problem dimension increases. The model 
is governed by the SDE
\begin{equation}
    \ud X_t = -\nabla\Psi(X_t)\,\ud t + U_t\,\ud t + \ud W_t, \qquad X_0 = 0,
\end{equation}
with cost
\begin{equation}
    J = \mathbb{E}\!\left[\int_0^T\frac{1}{2}\|U_s\|^2\,\ud s 
    + \ell_f(X_T)\right],
\end{equation}
where
\[
    \Psi(x) = \sum_{i=1}^n \kappa_i(x_i^2-1)^2, \qquad 
    \ell_f(x) = \sum_{i=1}^n v_i(x_i^2-1)^2.
\]
We set $\kappa_i = v_i = 1$, $T=1$, $\Delta t = 0.005$, and vary the 
dimension $n \in \{5,6,7,8,9,10\}$. Performance is measured by the relative 
cost gap $(J-J^\star)/J^\star$, estimated with $N=10000$ samples, where 
$J^\star$ is the optimal cost. The uncontrolled system is included as a 
baseline. As shown in Fig.~\ref{fig:double-well}, TR-BSDE consistently 
outperforms adjoint matching across all dimensions, with the relative cost 
gap remaining small and stable as $n$ increases

\begin{figure}[h]
    \centering
    \includegraphics[width=0.75\linewidth]{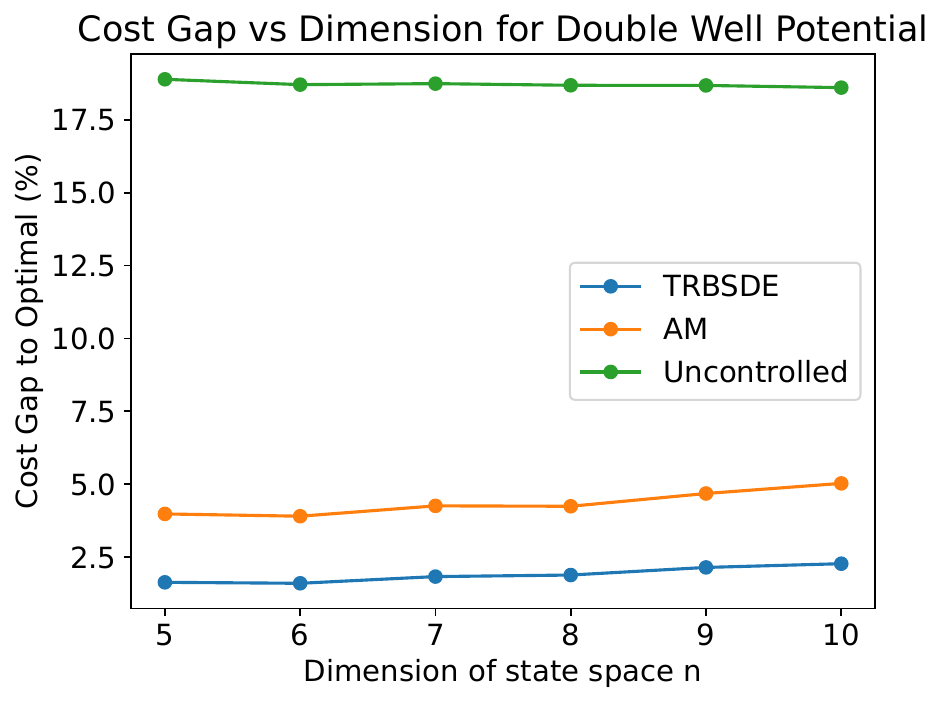}
    \caption{Comparison of TR-BSDE and Adjoint Matching on the double well benchmark across state dimensions $n \in \{5, 6, 7, 8,9, 10\}$. The y-axis reports the relative cost gap $(J - J^\star)/J^\star$, where $J$ is the realized cost under the learned controller and $J^\star$ is the optimal cost obtained from the path integral representation. Each estimate uses $N = 10000$ Monte Carlo samples. TRBSDE consistently achieves a smaller gap than Adjoint Matching as $n$ increases.}
    \label{fig:double-well}
\end{figure}

\begin{remark}[Computational limitations]
The main computational bottleneck of TR-BSDE, relative to adjoint matching, 
is the trace term in~\eqref{eq:tr-bsde}, which requires evaluating the 
Hessian of a neural network and becomes increasingly expensive as $n$ grows. 
Adjoint matching avoids this cost but incurs higher gradient variance, as 
demonstrated both theoretically in Section~\ref{sec:adjoint} and numerically. A natural remedy is to 
replace the exact trace with Hutchinson's estimator~\cite{hutchinson1989stochastic}, 
a stochastic approximation that has proven effective in similar 
settings~\cite{song2020sliced}, and which we leave 
for future work. Additionally, TR-BSDE requires an estimated score function, 
which is an extra ingredient compared to adjoint matching.
\end{remark}

\bibliographystyle{IEEEtran}
\bibliography{Ref}

\end{document}

%% file: _definitions.tex
\def\Re{\mathbb{R}}
\def\R{\mathbb{R}}

\def\argmin{\mathop{\text{\rm arg\,min}}}

\def\notes#1{\marginpar{\tiny #1}\typeout{Notes!
Notes!
Notes!
}}
\renewcommand{\notes}[1]{\typeout{notes!}}

\def\Re{\field{R}}

\def\Expect{{\mathbb E}}

\def\R{\mathbb{R}}

\newtheorem{remark}{Remark}
\newtheorem{proposition}{Proposition}

\def\beq{\begin{eqnarray}} 
\def\bc{\begin{center}} 
\def\be{\begin{enumerate}}
\def\bi{\begin{itemize}} 
\def\bs{\begin{small}}
\def\bS{\begin{slide}}
\def\ec{\end{center}} 
\def\ee{\end{enumerate}}
\def\ei{\end{itemize}}
\def\es{\end{small}}
\def\eS{\end{slide}}
\def\eeq{\end{eqnarray}}

\newcommand{\trace}{\text{Tr}}

\newcommand{\ud}{\,\mathrm{d}}

\def\Re{\mathbb{R}}

\def\argmin{\mathop{\text{\rm arg\,min}}}

\renewcommand{\Re}{\mathbb{R}}

\newcounter{rmnum}

\newcounter{anum}